\theoremstyle{plain}
\newtheorem{theorem}{Theorem}[section]
\newtheorem{corollary}[theorem]{Corollary}
\newtheorem{lemma}[theorem]{Lemma}
\newtheorem{proposition}[theorem]{Proposition}
\theoremstyle{definition}
\newtheorem{example}[theorem]{Example}
\newtheorem{definition}[theorem]{Definition}
\newtheorem{remark}[theorem]{Remark}
\theoremstyle{remark}
\begin{document}
\title{A Schreier type property for modules}

\author{Tiberiu Dumitrescu  and Mihai Epure }

\address{Facultatea de Matematica si Informatica,University of Bucharest,14 A\-ca\-de\-mi\-ei Str., Bucharest, RO 010014,Romania}
\email{tiberiu\_dumitrescu2003@yahoo.com, tiberiu@fmi.unibuc.ro, }

\address{Simion Stoilow Institute of Mathematics of the Romanian AcademyResearch unit 6, P. O. Box 1-764, RO-014700 Bucharest, Romania}\email{epuremihai@yahoo.com, mihai.epure@imar.ro}

\thanks{2020 Mathematics Subject Classification: 13A15, 13C11 and 13F15.}
%\thanks{Key words and phrases: Schreier domain, pseudo-Dedekind domain, Pr\"ufer domain.}

\begin{abstract}
\noindent 
We extend to torsion-free modules over integral domains the theory of (pre)-Schreier domains initiated by Cohn and Zafrullah .
\end{abstract}

\maketitle
\section{Introduction}
\noindent

In \cite{Co}, Cohn introduced and studied the {\em Schreier domains}. An (integral) domain $D$ is called a Schreier domain if: 

$(i)$ $D$ is
integrally closed and

$(ii)$  $a|b_1b_2$ with $a,b_1,b_2\in D$, implies $a=a_1a_2$ such that $a_1 | b_1$, $a_2 | b_2$.
\\
A GCD domain (e.g. a UFD) is Schreier  \cite[Theorem 2.4]{Co}.
A polynomial extension of a Schreier domain is a Schreier domain, cf.   \cite[Theorem 2.7]{Co}.

Later on, Zafrullah \cite{Z} studied the domains  satisfying condition $(ii)$ above calling them {\em pre-Schreier domains}.
Subsequently the Schreier domains and their generalizations were studied in \cite{MR}, \cite{DM}, \cite{ADZ}, \cite{DK} and \cite{AD}.

In this paper we   extend the pre-Schreier concept to  modules.  
Say that a torsion-free  module $M$ over the domain $A$ is a {\em PS module (pre-Schreier module)} if 
\begin{center}
$a|bx$ with nonzero $a,b\in A,x\in M$ implies $a=a_1a_2$ in $A$ with $a_1|b$ and $a_2|x$.
\end{center}
Note that this definition is equivalent to Definition \ref{0}, cf. Theorem \ref{1}. Clearly a domain $A$ is a PS module over itself iff $A$ is a pre-Schreier domain.

We  list several results obtained in this paper.
While any torsion-free divisible module is clearly PS, there exists domains, like $\mathbb{R}+X\mathbb{C}[[X]]$, whose PS modules are divisible  hence not finitely generated when nozero (Example \ref{503}). 
A torsion-free module $M$ over a domain $A$ is PS iff $aM\cap Ax$ is a locally cyclic module for all $a\in A$ and $x\in M$ (Theorem \ref{1}).
Recall \cite[page 1899]{Z} that a module   is   {\em locally cyclic} if its  finitely generated  submodules  are contained in   cyclic submodules. A locally cyclic torsion-free module over a domain is flat. 
 
A  submodule $N$ of a PS module $M$ is still PS provided $M/N$ is torsion free (Corollary \ref{101}).
This simple property may be used to produce new PS modules from old ones (e.g. Example \ref{201}).
A domain having a nonzero finitely generated PS ideal is a PS domain
(Proposition \ref{7}). An atomic domain is a UFD provided it has a nonzero finitely generated PS module
(Corollary \ref{604}). Similarly, a Prufer domain is Bezout provided it has a nonzero finitely generated PS module
(Proposition   \ref{44}).
In fact the Bezout domains are exactly the domains whose   torsion-free  modules are  PS (Proposition \ref{44}).
Over a GCD domain, the PS module property is a local property
(Proposition  \ref{43}).
Let $A$ be an atomic  domain and $S$ the multiplicative set of $A$ generated by all nonprime atoms. 
The PS modules over $A$ are exactly the PS modules over the UFD $A_S$
(Theorem \ref{601}).

Any inductive limit or direct sum of PS modules  is a PS module
(Proposition \ref{4} and Theorem  \ref{45}). 
Using Lazard's Theorem \cite[Theorem 1.2]{L}, we show that
a PS module remains PS when tensored by a flat module (Theorem \ref{102}). Consequently, a flat module (e.g. a free one) over a PS domain is a PS module (Corollary \ref{5} and Theorem  \ref{8}). 
Over a weak GCD domain (i.e. a domain where any pair of elements has a maximal common divisor), a direct product of PS modules is a PS module (Theorem \ref{12}). Conversely, a domain $A$ is a GCD domain provided the direct product $A^A$ is a PS  module (Corollary \ref{510}).
An explicit example a PS domain $A$ with $A^\mathbb{N}$ a non-PS module 
 is given in Example \ref{50}. 
Examples of nonflat PS  modules over UFDs are given in 
Examples \ref{201} and \ref{51}.

By \cite[Theorem 2.7]{Co}, a polynomial extension of a Schreier domain is still Schreier. We adapt the proof  \cite[Theorem 2.7]{Co} to obtain a module variant of that result (Theorem \ref{61}).
Over a weak GCD domain, $Hom(M,P)$ is a PS module provided $P$ is PS (Proposition  \ref{62}). In particular, over a GCD domain, the dual of any module is a PS module.
Over a weak GCD domain,  a torsion-free module has a kind of PS envelope (see the text after Proposition \ref{64}). 

Let $A$ be a domain and $M$ a torsion-free $A$-module. If $y=ax$ with 
$a\in A-\{0\}$ and $x,y\in M-\{0\}$, we say that $a,x$ are divisors of $y$ and we write $y/a$ and  $y/x$ for $x$ and $a$ respectively.
Throughout this paper, all rings are commutative and unitary.
For basic results or undefined terminology our references are \cite{FS}, \cite{G} and  \cite{K}.

\section{Basic results}

We give our key definition which is a module extension of the concept of pre-Schreier domain.

\begin{definition}\label{0}  
Let $A$ be a domain and $M$ a torsion-free $A$-module. We say that $M$ is a {\em pre-Schreier module (PS module)}, if every equality 
$$ax=by \mbox{ with } a,b\in A-\{0\},\ x,y\in M-\{0\}$$ 
has a refinement  
$$
\begin{array}{ccccc}
 & \vline & a & x \\
\hline 
 b & \vline & c & d \\
 y & \vline & e & z \\
\end{array}\ \ \ \ \  \mbox{ for some } c,d,e\in A,\ z\in M
$$
in the sense that\ \ 
$a=ce,\ b=cd,\ x=dz,\ y=ez.$
\end{definition}
Note that each one of the elements $c,d,e$ or $z$ above determines the others. This fact enables us to use    phrases like "the element $z\in   M-\{0\}$ gives the refinement"
$$
\begin{array}{ccccc}
 & \vline & a & x \\
\hline 
 b & \vline & (bz)/x & x/z \\
 y & \vline & y/z & z \\
\end{array}
$$
with the meaning that $ax=by$, $z|x,y$ and $x|bz $.

\begin{remark}  \label{21} With notation above, let $t\in A$ be a common divisor of $a$ and $b$. It easily follows that, if $(a/t)x=(b/t)y$ has  a refinement, then so does $ax=by$. A similar remark applies when $t$ is a common factor of $a$ and $y$ (or $x$ and $y$ respectively). Therefore, when trying to find  a refinement of $ax=by$, we may cancel out such common factors.\end{remark}

Clearly  $A$ is a PS module over itself iff $A$ is a pre-Schreier domain in the sense of \cite[page 1895]{Z} called in our paper simply {\em PS domain}. Call an ideal $I$ of  $A$ a {\em PS ideal} if $I$ is a PS module over $A$.  
We give an example of a PS module. Recall that a domain $A$ is {\em atomic} if every non-zero non-unit of $A$ can be written as a finite product of irreducible elements (atoms).

\begin{example} \label{15}  
Let $A$ be an atomic domain and $S$ the multiplicative set of $A$ generated by the non-prime atoms of $A$. Note that the fraction ring $A_S$ is a UFD as every nonzero nonunit of $A_S$ is a product of primes.
We show that $A_S$ is a PS module over $A$. Indeed, let $a(x/t)=b(y/s)$ with $a,b,x,y\in A-\{0\}$ and $s,t\in S.$ 
By Remark \ref{21}, we may cancel out common prime factors 
to reach the case  $a,b\in S$ when we  have the trivial refinement
$
\begin{array}{ccccc}
 & \vline & a & x/t \\
\hline 
 b & \vline & 1 & b \\
 y/s & \vline & a & x/(tb). \\
\end{array}
$

In particular, since the non-prime atoms of $\mathbb{Z}[\sqrt{-3}]$ are $2$, $1\pm \sqrt{-3}$ and $(1 + \sqrt{-3})(1 - \sqrt{-3})=4$, it follows that $\mathbb{Z}[\sqrt{-3},1/2]$ is a PS module over $\mathbb{Z}[\sqrt{-3}]$.
\end{example}

The next result collects some simple facts. A submodule $N$ of an $A$-module $M$ is called a {\em RD-submodule} if $aM\cap N =aN$ for each $a\in A$,
cf. \cite[page 38]{FS}. A direct summand of $M$ is a RD-submodule, cf. \cite[page 39]{FS}.

\begin{proposition} \label{2}  
Let $A$ be a domain and $M$ a torsion-free $A$-module.

$(i)$ If $M$ is divisible, then $M$ is PS. In particular, the quotient field of $A$ is a PS module over $A$.

$(ii)$ If $M$ is PS and $N$ is a RD-submodule of $M$, then $N$ is PS. 

$(iii)$ If $A$ is a Bezout domain, then $M$ is PS.

$(iv)$ Suppose that $M$ is a PS module, $a,b\in A-\{0\}$ and $x\in M-\{0\}$. If $b|ax$ and $a,b$ are coprime (i.e. their gcd is one), then $b|x$. 

$(v)$ If $I$ is a PS ideal of $A$ 
containing a pair of nonzero coprime elements, then $I=A$.

$(vi)$ If $M$ is PS and $S\subseteq A$ is a multiplicative set, 
then $M_S:=M\otimes_A A_S$ is a PS module over the fraction ring $A_S$.
\end{proposition}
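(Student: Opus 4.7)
The plan is to handle the six parts in order, since the later ones build on the machinery (Definition~\ref{0}) used for the earlier ones, and to keep the write-up compact I would bundle the parts that have similar flavour.

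For part $(i)$, given $ax = by$ with all ingredients nonzero, I would use divisibility to produce $z\in M$ with $abz = ax$; because $M$ is torsion-free this yields $bz = x$ and $az = y$, so the choice $c = 1$, $d = b$, $e = a$ furnishes the required refinement. For part $(ii)$, I would apply the PS property of $M$ to $ax = by$ (where $x, y \in N$) and obtain $z\in M$ with $x = dz$, $y = ez$, $a = ce$, $b = cd$; since $dz = x \in N \cap dM = dN$ and both $M$ and $N$ are torsion-free, $z$ must actually live in $N$, giving the refinement inside $N$. For part $(iii)$, I would let $c = \gcd(a,b)$ (which exists in a Bezout domain), write $a = ce$, $b = cd$ with $\gcd(d,e) = 1$, and deduce $ex = dy$; a Bezout identity $ud + ve = 1$ combined with $d \mid ex$ forces $d \mid x$, so I can set $x = dz$, whence $y = ez$ follows from torsion-freeness.

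For parts $(iv)$ and $(v)$, the trick in each case is to extract a common divisor. In $(iv)$, applying PS to $ax = bw$ (where $bw = ax$) gives $a = ce$, $b = cd$, $x = dz$; the element $c$ divides both $a$ and $b$, so coprimality makes $c$ a unit and $b$ associate to $d$, giving $b \mid x$. In $(v)$, I would take coprime $a, b \in I$ and apply PS to the trivial equality $a \cdot b = b \cdot a$ (both terms viewed in $I$), producing $z \in I$ dividing both $a$ and $b$; coprimality again forces $z$ to be a unit, so $I$ contains a unit and equals $A$.

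Part $(vi)$ is the main bookkeeping step. Given $\alpha\xi = \beta\eta$ in $M_S$ with $\alpha, \beta \in A_S - \{0\}$ and $\xi, \eta \in M_S - \{0\}$, I would first bring $\alpha, \beta$ to a common denominator $\sigma \in S$ and $\xi, \eta$ to a common denominator $\tau \in S$, writing $\alpha = a/\sigma$, $\beta = b/\sigma$, $\xi = x/\tau$, $\eta = y/\tau$ with all of $a, b, x, y$ nonzero. The equality in $M_S$ gives $u(ax - by) = 0$ in $M$ for some $u \in S$, and since $M$ is torsion-free this yields $ax = by$ in $M$. Applying the PS property of $M$ produces $c, d, e \in A$ and $z \in M$ refining $ax = by$; then setting $\gamma = c/\sigma$, $\delta = d/1$, $\epsilon = e/1$, $\zeta = z/\tau$ in $A_S$ and $M_S$ respectively, one checks directly that $\gamma\epsilon = \alpha$, $\gamma\delta = \beta$, $\delta\zeta = \xi$, $\epsilon\zeta = \eta$. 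The only genuine care is verifying that the factors are nonzero, which follows because $a, b, x, y$ are nonzero in $M$ and $A$.

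The main obstacle, if there is one, is the denominator-clearing bookkeeping in $(vi)$; everything else reduces quickly once one exploits torsion-freeness and the uniqueness observation (noted after Definition~\ref{0}) that a single element of the refinement determines the others.
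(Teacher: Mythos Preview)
Your proposal is correct and follows essentially the same route as the paper's proof: each part is handled by producing the refinement directly (divisibility in $(i)$, RD-condition to pull $z$ into $N$ in $(ii)$, reduction to the coprime case via a B\'ezout gcd in $(iii)$, extracting a common divisor that must be a unit in $(iv)$--$(v)$, and clearing denominators in $(vi)$). The only cosmetic difference is that in $(vi)$ the paper uses a single common denominator $s$ for all four of $a,b,x,y$, whereas you use separate denominators $\sigma,\tau$ for scalars and module elements---both work equally well.
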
 
\begin{proof}
Assertion $(i)$ is clear. 
For $(ii)$, let $ax=by \mbox{ with } a,b\in A-\{0\},\ x,y\in N-\{0\}$.
Then a refinement\ \ \  $
\begin{array}{ccccc}
 & \vline & a & x \\
\hline 
 b & \vline & c & d \\
 y & \vline & e & z \\
\end{array}$  \ \ \  has necessarily $z\in N$ since $N$ is an RD-submodule.

For $(iii)$, let $ax=by \mbox{ with } a,b\in A-\{0\},\ x,y\in M-\{0\}$. Dividing by   $gcd(a,b)$, we may assume that $aA+bA=A$. Then $x\in Aby+Abx$, so  we have the refinement 
$
\begin{array}{ccccc}
 & \vline & a & x \\
\hline 
 b & \vline & 1 & b \\
 y & \vline & a & x/b. \\
\end{array}$
\ \ In $(iv)$, if $y=(ax)/b$, then 
$ax=by$  has   essentially  only the refinement  \ \ \ 
$\begin{array}{ccccc}
 & \vline & a & x \\
\hline 
 b & \vline & 1 & b \\
 y & \vline & a & x/b \\
\end{array}$.
For $(v)$ repeat the argument in $(iv)$ with $x=b$ and $y=a$ to get $1=b/b\in I$.
For $(vi)$, let $a'x'=b'y' \mbox{ with } a',b'\in A_S-\{0\},\ x,y\in M_S-\{0\}$. Write $a'=a/s$, $b'=b/s$, $x'=x/s$, $y'=y/s$ with $a,b\in A$, $x,y\in M$ and $s\in S$. Then a refinement 
$
\begin{array}{ccccc}
 & \vline & a & x \\
\hline 
 b & \vline & d & b_1 \\
 y & \vline & a_1 & z \\
\end{array}$
gives the refinement
$
\begin{array}{ccccc}
 & \vline & a' & x' \\
\hline 
 b' & \vline & d & b_1/s \\
 y' & \vline & a_1/s & z \\
\end{array}$.
\end{proof}

\begin{corollary} \label{101}
A  submodule $N$ of a PS module $M$ is still PS provided $M/N$ is torsion free.
\end{corollary}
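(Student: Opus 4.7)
The plan is to deduce this directly from Proposition \ref{2}(ii), which states that an RD-submodule of a PS module is itself PS. Thus it suffices to show that the torsion-freeness of $M/N$ forces $N$ to be an RD-submodule of $M$, i.e.\ that $aM\cap N=aN$ for every $a\in A$.

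The inclusion $aN\subseteq aM\cap N$ is immediate. For the reverse, suppose first that $a\neq 0$ and let $y\in aM\cap N$. Write $y=am$ with $m\in M$. Then $am=y\in N$, so passing to the quotient we get $a\overline{m}=0$ in $M/N$. Since $M/N$ is torsion-free and $a\neq 0$, this forces $\overline{m}=0$, that is, $m\in N$, whence $y=am\in aN$. If $a=0$, both $aM\cap N$ and $aN$ equal $\{0\}$, so equality is trivial.

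Having established that $N$ is an RD-submodule of the PS module $M$, Proposition \ref{2}(ii) yields that $N$ is PS. There is essentially no obstacle here: the only content beyond invoking Proposition \ref{2}(ii) is the one-line translation between the condition ``$M/N$ is torsion-free'' and the RD-submodule condition, which is the standard observation that torsion-freeness of a quotient is exactly what is needed to pull $a$-divisibility in $M$ back into $N$.
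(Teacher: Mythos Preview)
Your proof is correct and follows exactly the same approach as the paper: you verify that $M/N$ torsion-free makes $N$ an RD-submodule and then invoke Proposition~\ref{2}(ii). The only difference is that you spell out the (standard) equivalence between ``$M/N$ torsion-free'' and ``$N$ is RD,'' which the paper leaves implicit.
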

\begin{proof}
As $N$ is an RD-submodule of $M$,   part $(ii)$ of Proposition \ref{2} applies.  
\end{proof}

\begin{example}\label{503}
Every PS  module $M$ over the ring $A=\mathbb{R}+X\mathbb{C}[[X]]$ is divisible. Indeed, for each $m\in M$, we get $im=(iXm)/X\in  M$ because $X | (iX)(iXm)$ and $X$ is coprime to $iX$. Similarly, from $X | (iX)(im)$, we get $m/X = -i(im)/X$ for each $m\in M$. So $M$ is in fact  a module 
over the quotient field $\mathbb{C}((X))=A[1/X]$ of $A$,  thus $M$ is divisible. Hence zero is the only finitely generated PS module over $A$.
%The argument above can be extended to show that, over an atomic domain $B$ whose atoms are nonprime, every PS module $N$ is divisible. Indeed, let $b$ be an atom of $B$. Then $b$ divides a product $c_1c_2\cdots c_n$ of atoms without diving any $c_i$. Let $x\in N$. Then $b|c_1c_2\cdots c_nx$ and, using part $(iv)$ of Proposition \ref{2}, we may show step-by-step that $b|c_2\cdots c_nx$, $b|c_3\cdots c_nx$ and so on up to $b|x$.
\end{example}

By \cite[Definition page 1899]{Z}, a module $M$ over a domain $A$ is called {\em locally cyclic} if every finitely generated (equivalently two-generated) submodule of $M$  is contained in a cyclic submodule of $M$. If $M$ is also nonzero, then $M$ is a directed union of its nonzero cyclic submodules (copies of $A$),  thus $M$ is flat, cf. \cite[Theorem 1.2]{L}. By \cite[Theorem 1.1]{Z}, $A$ is a PS domain iff $aA\cap bA$ is locally cyclic for all $a,b\in A$.

\begin{theorem} \label{1}  
Let $A$ be a domain and $M$ a torsion-free $A$-module. The following are equivalent.

$(i)$ $M$ is a PS module.

$(ii)$ If $a,b\in A-\{0\},\ y\in M-\{0\}$ and $a|by$, then $a=cd$ for some $c,d\in A$ with $c|b$ and $d|y$.

$(iii)$ If $b\in A-\{0\},\ x,y\in M-\{0\}$ and $x|by$, then $x=az$ for some $a\in A$, $z\in M$ with $a|b$ and $z|y$.

$(iv)$ If $a\in A-\{0\},\ x,y,z\in M-\{0\}$ satisfy 
$a,x | y,z$, then $a,x |u| y,z$ for some $u\in M$. 

$(v)$ $aM\cap Ax$ is a locally cyclic module for
all $a\in A-\{0\}$, $x\in M-\{0\}$.

$(vi)$ $(aM:x)$ is a locally cyclic ideal for
all $a\in A-\{0\}$, $x\in M-\{0\}$.
\end{theorem}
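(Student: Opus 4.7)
I would split the six conditions into two clusters with straightforward internal equivalences and then bridge them. The internal equivalences are (i)$\Leftrightarrow$(ii)$\Leftrightarrow$(iii) and (iv)$\Leftrightarrow$(v)$\Leftrightarrow$(vi); the bridges I will establish are (v)$\Rightarrow$(i) and (i)$\Rightarrow$(vi).

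For the first cluster, a refinement of $ax=by$ simultaneously packages both a factorization $a=ce$ in $A$ (the data of (ii), with $c\mid b$ and $e\mid y$) and a factorization $x=dz$ in $M$ (the data of (iii), with $d\mid b$ and $z\mid y$), so (i) delivers both. Conversely, starting from (ii) or (iii) one reconstructs the remaining refinement entries by plugging the given factorization back into $ax=by$ and cancelling nonzero factors, using torsion-freeness of $M$; the nonzeroness of all auxiliary quantities produced is automatic from the nonzeroness hypotheses. The equivalence (v)$\Leftrightarrow$(vi) is immediate from the $A$-isomorphism $A\cong Ax$, $r\mapsto rx$ (an isomorphism since $\operatorname{Ann}(x)=0$ in the torsion-free module $M$), which pulls $aM\cap Ax$ back to $(aM:x)$. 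The equivalence (iv)$\Leftrightarrow$(v) is then a reformulation: the elements of $aM\cap Ax$ are exactly those $u\in M$ with $a\mid u$ and $x\mid u$, so (iv) asserts that any two such elements are both multiples of a third member of $aM\cap Ax$, which is precisely local cyclicity via the two-generated characterization quoted just before the theorem.

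To bridge the clusters I first prove (v)$\Rightarrow$(i). Given $ax=by$ with all entries nonzero, both $ay$ and $by$ lie in $aM\cap Ay$, so local cyclicity yields $u=aw=\gamma y\in aM\cap Ay$ with $ay=\alpha u$ and $by=\beta u$; cancelling $a$ and $y$ in turn in the torsion-free module produces $a=\alpha\gamma$, $b=\beta\gamma$, $y=\alpha w$, $x=\beta w$, which is precisely a refinement. For (i)$\Rightarrow$(vi) I take $b_1,b_2\in(aM:x)$ with $b_ix=ay_i$ (the case $b_1=0$ or $b_2=0$ is trivial); then $a(b_1y_2-b_2y_1)=0$ forces $b_1y_2=b_2y_1$ by torsion-freeness, and applying (i) to this equation gives $c,d,e\in A$ and $z\in M$ with $b_1=cd$, $b_2=ce$, $y_1=dz$, $y_2=ez$; now $cx=(b_1x)/d=(ay_1)/d=az\in aM$ shows that $c\in(aM:x)$ is a common $A$-divisor of $b_1$ and $b_2$, so $Ab_1+Ab_2\subseteq Ac\subseteq(aM:x)$.

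The main obstacle is the implication (i)$\Rightarrow$(vi), where one must exhibit a common $A$-divisor of $b_1,b_2$ that itself lies inside the ideal $(aM:x)$. The decisive move is to trade the two divisibility hypotheses $b_ix\in aM$ for the single clean equation $b_1y_2=b_2y_1$ between module elements, which is exactly the kind of input the refinement property (i) is built to digest.
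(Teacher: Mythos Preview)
Your proof is correct. The cluster equivalences (i)$\Leftrightarrow$(ii)$\Leftrightarrow$(iii) and (iv)$\Leftrightarrow$(v)$\Leftrightarrow$(vi) are handled exactly as in the paper, and your bridge (v)$\Rightarrow$(i) is essentially the paper's argument (iv)$\Rightarrow$(i) in different clothing (both pick two elements of $aM\cap Ay$ and extract a refinement from the common ``divisor'' supplied by local cyclicity).

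The one genuine difference is in the hard direction. The paper proves (i)$\Rightarrow$(iv) by two successive refinements: from $a,x\mid y,z$ it first refines $(y/x)x=a(y/a)$ to produce an intermediate $x_1$, then refines a second equation involving $z$ to descend to $x_2$, finally taking $u=ax_2$. Your route (i)$\Rightarrow$(vi) instead converts the two membership conditions $b_ix=ay_i$ into the single clean equation $b_1y_2=b_2y_1$ and dispatches it with a \emph{single} refinement, reading off a common divisor $c\in(aM:x)$ directly. Translated back to (iv), your trick gives a one-refinement proof: write $y=\beta_1x=aw_1$, $z=\beta_2x=aw_2$, derive $\beta_1w_2=\beta_2w_1$, refine once, and set $u=cx=a\zeta$. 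So your approach is not only correct but slightly more economical than the paper's for this implication; the price is the small preliminary observation $b_1y_2=b_2y_1$, which the paper does not isolate.
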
 
\begin{proof}
The equivalence of $(i)$, $(ii)$, $(iii)$ can be proven easily. 
For $(i)\Rightarrow (iv)$, let $a\in A-\{0\},\ x,y,z\in M-\{0\}$ such that 
$a,x | y,z$. 
There exists some $x_1\in M-\{0\}$ giving the refinement
$$
\begin{array}{ccccc}
 & \vline & y/x & x \\
\hline 
 a & \vline & (ax_1)/x & x/x_1 \\
 y/a & \vline &  y/(ax_1) & x_1. \\
\end{array}$$
 Then $(ax_1/x)(z/a) =(z/x)x_1$ with $ax_1/x, \ z/x\in A$  and 
 $z/a\in M$, hence 
there exists some $x_2\in M-\{0\}$ giving the refinement
$$\begin{array}{ccccc}
 & \vline & (ax_1)/x & z/a \\
\hline 
 z/x & \vline & (ax_2)/x & z/(ax_2) \\
 x_1 & \vline &  x_1/x_2 & x_2 \\
\end{array}$$
so $a,x | ax_2 | y,z$ because $ax_2|ax_1|y$.
For $(iv)\Rightarrow (i)$, let $ax=by$ with $a,b\in A-\{0\}$ and $x,y\in M-\{0\}$. By $(iv)$, we get $b,x | z | by, bx$ for some $z\in M-\{0\}$,  so $ax=by$ refines to 
$$
\begin{array}{ccccc}
 & \vline & a & x \\
\hline 
 b & \vline & z/x & bx/z \\
 y & \vline & by/z & z/b. \\
\end{array}$$
Next note that $(v)$  is a restatement of $(iv)$, while $(vi)$  is a restatement of $(v)$ because $aM\cap Ax$ is isomorphic to $(aM:x)$.
\end{proof}

Recall that a domain $A$ is an {\em  ACCP domain} if every ascending chain  of principal ideals of $A$ eventually stops. Clearly a Noetherian domain is ACCP.

\begin{corollary} \label{20}  
Let $A$ be an ACCP domain   and $M$ a torsion-free $A$-module. Then $M$ is a PS module iff $(aM:x)$ is a principal ideal 
for all $a\in A-\{0\}$, $x\in M-\{0\}$.
\end{corollary}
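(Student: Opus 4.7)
The plan is to invoke Theorem~\ref{1}, part $(vi)$, which already gives the equivalence ``$M$ is PS $\Leftrightarrow (aM:x)$ is locally cyclic for every $a\in A-\{0\}$ and $x\in M-\{0\}$''. It therefore suffices to prove the following purely ideal-theoretic fact: in an ACCP domain, an ideal is locally cyclic if and only if it is principal. The ``if'' direction is immediate, so the content lies in showing that every locally cyclic ideal is principal under ACCP.

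To prove this, I would take a locally cyclic ideal $I\subseteq A$ and consider the family
$$\mathcal{F}=\{bA : b\in I\}$$
of principal ideals contained in $I$. Since $A$ is ACCP, every nonempty collection of principal ideals admits a maximal element (otherwise one could build a strictly ascending chain of principal ideals), so $\mathcal{F}$ contains a maximal element $cA$. Given any $x\in I$, local cyclicity produces some $d\in I$ with $xA+cA\subseteq dA$; in particular $cA\subseteq dA$, and maximality of $cA$ in $\mathcal{F}$ forces $dA=cA$. Hence $x\in cA$, showing $I\subseteq cA\subseteq I$, i.e.\ $I=cA$ is principal.

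Combining this with Theorem~\ref{1}$(vi)$ yields the corollary: $M$ is PS iff each $(aM:x)$ is locally cyclic iff (by the above, using ACCP) each $(aM:x)$ is principal.

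There is no real obstacle here; the only thing to be slightly careful about is extracting a maximal element of $\mathcal{F}$ from ACCP without inadvertently assuming Noetherianness. This is handled by the standard observation that ACCP is equivalent to the maximum condition on principal ideals.
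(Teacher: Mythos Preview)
Your proof is correct and follows exactly the paper's approach: invoke Theorem~\ref{1}$(vi)$ and then use that in an ACCP domain every locally cyclic ideal is principal. The paper states this last fact without proof, whereas you supply the standard maximality argument; there is nothing to add.
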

\begin{proof}
Use part $(vi)$ of Theorem \ref{1} and the fact that 
in an ACCP  domain every  locally cyclic ideal is principal.
\end{proof}

\begin{example}
Consider the extension $A=\mathbb{Z}[\sqrt{-5}]\subseteq B=\mathbb{Z}[\sqrt{-5},1/2]$. It is well-known that $B$ is a PID hence a PS domain. 
As the ideal $(1+\sqrt{-5})B:_A 1=(3,1+\sqrt{-5})A$ is not principal, $B$ is not a PS module over $A$.
\end{example}

We continue to derive consequences of Theorem \ref{1}.

\begin{proposition}  \label{7}
Let $A$ be a domain with quotient field $K$ and let $M$ be a PS module over $A$.  Then:

$(i)$ $Kx\cap M$ is a locally cyclic PS module for each $x\in M$.

$(ii)$ A rank one PS module is locally cyclic hence flat.

$(iii)$ $A$ is a PS domain provided it has a   finitely generated rank one PS module.
\end{proposition}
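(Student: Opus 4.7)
The plan is to derive (ii) and (iii) from (i), which itself reduces to Theorem \ref{1}(v) after clearing a common denominator. For (i), I view $M$ inside $M\otimes_A K$ via torsion-freeness and set $N := Kx\cap M$. This is a submodule of $M$ with torsion-free quotient, because $am\in Kx$ with $a\in A-\{0\}$ forces $m\in Kx$; Corollary \ref{101} then delivers the PS property of $N$.

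To show $N$ is locally cyclic, I would take $y_1,y_2\in N$ and, after clearing a common denominator, produce $b\in A-\{0\}$ and $a_1,a_2\in A$ with $by_1=a_1x$ and $by_2=a_2x$. Then $by_1,by_2\in bM\cap Ax$, a module that is locally cyclic by Theorem \ref{1}(v). This furnishes $z\in bM\cap Ax$ with $by_1,by_2\in Az$; writing $z=bw=dx$, I would cancel $b$ (valid since $M$ is torsion-free) to conclude $y_1,y_2\in Aw$ with $w=(d/b)x\in N$.

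For (ii), given a nonzero rank one PS module $M$ and any nonzero $x\in M$, the embedding $M\hookrightarrow M\otimes_A K$ identifies $M$ with a submodule of the one-dimensional $K$-space $Kx$, hence $M=Kx\cap M$ is locally cyclic by (i); a locally cyclic torsion-free module over a domain is flat, as noted right after Theorem \ref{1}. For (iii), if $M$ is a nonzero finitely generated rank one PS module, (ii) makes $M$ locally cyclic and hence cyclic, say $M=Ax$ with $x\neq 0$. The map $A\to M$, $a\mapsto ax$, is an $A$-module isomorphism by torsion-freeness, so the PS structure transfers to $A$ itself, i.e., $A$ is a PS domain.

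The step requiring the most care is the local cyclicity argument in (i), where the bookkeeping with common denominators and the application of Theorem \ref{1}(v) must be done cleanly; everything else is formal manipulation of rank together with the elementary fact that a finitely generated locally cyclic module is cyclic. Degenerate cases such as $x=0$ in (i) or $M=0$ in (iii) are trivial.
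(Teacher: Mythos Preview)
Your argument is correct and essentially coincides with the paper's. Both obtain the PS property of $Kx\cap M$ from Corollary~\ref{101} and reduce its local cyclicity to Theorem~\ref{1}(v) by clearing denominators: the paper writes $Kx\cap M$ as the directed union of the submodules $A(x/s)\cap M\cong Ax\cap sM$, while you pick a single common denominator $b$ for a given pair $y_1,y_2$ and work inside $bM\cap Ax$; the derivations of (ii) and (iii) are identical.
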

\begin{proof}
$(i)$ As $K/(Kx\cap M)$ is a torsion free module, $Kx\cap M$ is a   PS module by Corollary \ref{101}.
Note that 
$Kx\cap M$ is a directed union of  all submodules $A(x/s)\cap M$ with $s\in A-\{0\}$. Moreover $A(x/s)\cap M$ is isomorphic to
$Ax\cap sM$, so it is locally cyclic by part $(v)$ of  Theorem  \ref{1}. Since a directed union of locally cyclic modules is locally cyclic, $Kx\cap M$ is locally cyclic.  Note that
$(ii)$ follows from $(i)$. Finally $(iii)$ follows from $(ii)$ because a nonzero finitely generated locally cyclic module is cyclic, hence   isomorphic to $A$.
\end{proof}

\begin{corollary}  \label{505}
A Noetherian domain is a UFD provided it has a nonzero finitely generated PS module.
\end{corollary}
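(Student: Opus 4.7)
The plan is to reduce to Proposition \ref{7}(iii) and then invoke the fact that an atomic pre-Schreier domain is a UFD. First I would pass from the given finitely generated PS module $M$ to a rank one PS submodule that is still finitely generated, in order to apply Proposition \ref{7}(iii).

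Pick any nonzero $x\in M$, let $K$ denote the quotient field of $A$, and set $N=Kx\cap M$. By Proposition \ref{7}(i), $N$ is a PS module, and it is evidently of rank one since it contains $x$ and sits inside the rank one module $Kx$. Because $A$ is Noetherian and $M$ is finitely generated, $M$ is a Noetherian module, so the submodule $N$ is finitely generated. Applying Proposition \ref{7}(iii) to $N$ gives that $A$ is a PS domain.

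It now remains to show that a Noetherian PS domain is a UFD. Being Noetherian, $A$ satisfies ACCP and in particular is atomic. I would verify that in any PS domain every atom is prime: if $p$ is an atom and $p\mid ab$, the PS property (applied to $A$ as a module over itself) yields a factorization $p=p_1p_2$ with $p_1\mid a$ and $p_2\mid b$; since $p$ is irreducible one of $p_1,p_2$ is a unit, so $p\mid a$ or $p\mid b$. Thus every atom of $A$ is prime, and combined with atomicity this gives that $A$ is a UFD.

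The only step requiring care is the finite generation of $N=Kx\cap M$, which relies on the Noetherian hypothesis (this is where the Noetherian assumption is actually used, rather than merely ACCP); the rest is a direct assembly of Proposition \ref{7} and the standard atom-is-prime argument for PS domains.
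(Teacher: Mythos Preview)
Your argument is correct and follows exactly the route the paper takes: use Proposition~\ref{7}(i) to obtain the rank one PS submodule $N=Kx\cap M$, use the Noetherian hypothesis to see that $N$ is finitely generated, and then apply Proposition~\ref{7}(iii) to conclude that $A$ is a PS domain. The paper's proof is the one-line ``Apply parts $(i)$ and $(iii)$ of Proposition~\ref{7}'' and leaves the final implication (Noetherian PS domain $\Rightarrow$ UFD) implicit; you have supplied that detail explicitly and correctly via the atom-is-prime argument.
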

\begin{proof}
Apply parts $(i)$ and $(iii)$ of Proposition \ref{7}.
\end{proof}

\begin{proposition} \label{511}
Let  $A$ be a domain having a nonprime atom $c$ and let $M$ be a PS module over $A$. Then $M=cM$, so $M=0$ if $M$ is finitely generated.
\end{proposition}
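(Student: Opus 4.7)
The plan is to pick an arbitrary nonzero $m \in M$ and show $c \mid m$ in $M$, exploiting the non-primality of $c$ via \emph{two} successive applications of the PS refinement. A one-shot attempt (applying Theorem \ref{1}(ii) to $c \mid (ab)m$) does not work: it yields $c = c_1c_2$ with $c_1 \mid ab$ and $c_2 \mid m$, and the case $c_2$ a unit only reproduces the known fact $c \mid ab$. The key trick will be to feed the relation into Definition \ref{0} in the more informative form $a(bm) = c(dm)$, which involves $a$ and $b$ separately and so can engage the hypotheses $c \nmid a$ and $c \nmid b$.

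Since $c$ is a nonprime atom, I choose $a, b \in A - \{0\}$ with $c \mid ab$, $c \nmid a$, $c \nmid b$, and write $ab = cd$ (so $d \neq 0$). For a fixed nonzero $m \in M$, all four factors in $a(bm) = c(dm)$ are nonzero, using torsion-freeness of $M$. Definition \ref{0} then furnishes $c_1, d_1, e_1 \in A$ and $z \in M$ with $a = c_1 e_1$, $c = c_1 d_1$, $bm = d_1 z$, $dm = e_1 z$. Since $c$ is irreducible, one of $c_1, d_1$ is a unit; if $d_1$ were a unit then $c_1 \sim c$, and the factorization $a = c_1 e_1$ would give $c \mid a$, contradicting the choice of $a$. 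Hence $c_1$ is a unit, $d_1 \sim c$, and the equation $bm = d_1 z$ shows $c \mid bm$ in $M$.

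Next I apply the PS property again via Theorem \ref{1}(ii) to $c \mid bm$: there exist $c_1', c_2' \in A$ with $c = c_1' c_2'$, $c_1' \mid b$ in $A$, and $c_2' \mid m$ in $M$. Irreducibility of $c$ forces a factor to be a unit. If $c_2'$ were a unit, then $c_1' \sim c$ would divide $b$, contradicting $c \nmid b$. So $c_1'$ is a unit, $c_2' \sim c$ divides $m$, and $m \in cM$. As $m$ was arbitrary and $0 \in cM$ trivially, this gives $M = cM$.

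For the final assertion, suppose $M$ is generated by $m_1, \ldots, m_n$. Writing each $m_i = c \sum_j b_{ij} m_j$ yields $(I - cB)(m_1,\ldots,m_n)^{T} = 0$; multiplying by the adjugate, $\det(I - cB)\, m_i = 0$ for every $i$. Expanding the determinant gives $\det(I - cB) = 1 + cr$ for some $r \in A$, so $(1 + cr) m_i = 0$. Torsion-freeness of $M$ forces either every $m_i = 0$ (i.e.\ $M = 0$) or $1 + cr = 0$; but the latter makes $c$ a unit, impossible for an atom. The only care point I anticipate is the bookkeeping in the first refinement — once $c_1$ is forced to be a unit, everything else is routine.
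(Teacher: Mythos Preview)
Your proof is correct and follows essentially the same route as the paper: two successive applications of the PS property, first to $c\mid a(bm)$ to get $c\mid bm$, then to $c\mid bm$ to get $c\mid m$, with the determinant trick handling the finitely generated case. The only cosmetic difference is that for the first step you invoke the full refinement table of Definition~\ref{0} on $a(bm)=c(dm)$, whereas the paper uses the equivalent formulation Theorem~\ref{1}(ii) directly on $c\mid a(bx)$; your worry that a ``one-shot'' application fails applies only to the grouping $c\mid (ab)m$, not to $c\mid a(bm)$, which is what both you and the paper actually use.
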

\begin{proof}
As $c$ is nonprime, there exist $a,b\in A-cA$ with $c|ab$. 
Let $x\in M$. As $M$ is a PS module, $c$ is an atom (not dividing $a$ and $b$) and $c|(abx)$, we get $c|bx$, so $c|x$. Thus $M=cM$.
By determinant's trick it follows that $M=0$ if $M$ is finitely generated.
\end{proof}

We get the following improvement of Corollary \ref{505}.

\begin{corollary}  \label{604}
An atomic domain is a UFD provided it has a nonzero finitely generated PS module.
\end{corollary}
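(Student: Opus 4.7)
The plan is to obtain this as a short, essentially immediate consequence of Proposition \ref{511}. The idea is to argue by contradiction on the existence of a nonprime atom in $A$.

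More precisely, suppose $A$ is atomic and $M$ is a nonzero finitely generated PS module over $A$. I would assume toward a contradiction that $A$ admits a nonprime atom $c$. Then Proposition \ref{511} applies and gives $M = cM$, but the same proposition already deduces $M = 0$ from this equation together with the hypothesis that $M$ is finitely generated (via the determinant trick; note that $c$ is an atom, hence a nonunit, and $M$ is torsion-free so Nakayama-type reasoning applies cleanly). This contradicts $M \neq 0$, so every atom of $A$ must in fact be prime.

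To conclude, I would combine the two pieces: $A$ is atomic, meaning every nonzero nonunit factors as a finite product of atoms, and by the previous step each of these atoms is prime. A standard argument then shows such a factorization is unique up to units and order, so $A$ is a UFD.

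There is really no obstacle here beyond citing Proposition \ref{511}; the corollary is just the observation that the contrapositive of that proposition, applied to a nonzero finitely generated PS module, kills all nonprime atoms, and killing all nonprime atoms in an atomic domain is precisely the definition of being a UFD. In effect this is the promised ``improvement'' of Corollary \ref{505}: we are replacing the Noetherian hypothesis (used there to ensure atomicity and to make rank-one finitely generated PS modules cyclic) by the weaker atomic hypothesis, since Proposition \ref{511} lets us bypass the rank-one analysis that underlay Corollary \ref{505}.
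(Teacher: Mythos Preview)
Your proposal is correct and matches the paper's intended argument: the corollary is stated immediately after Proposition~\ref{511} with no further proof, precisely because it follows by the contrapositive reasoning you give (a nonprime atom would force $M=0$, so all atoms are prime, and an atomic domain with all atoms prime is a UFD).
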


\begin{proposition} \label{44}
For a domain $A$ the following are equivalent.

$(i)$  $A$ is a Bezout domain.

$(ii)$ $A$ is a Prufer domain and has a nonzero finitely generated PS module.

$(iii)$ All torsion-free $A$-modules are PS.
\end{proposition}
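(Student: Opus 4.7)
The plan is to handle the two easy directions $(i)\Rightarrow(iii)$ and $(i)\Rightarrow(ii)$ in one line each, dispose of $(iii)\Rightarrow(i)$ quickly, and concentrate the real effort on $(ii)\Rightarrow(i)$. For $(i)\Rightarrow(iii)$ I would just invoke Proposition \ref{2}(iii); for $(i)\Rightarrow(ii)$ I would note that Bezout implies Prufer and that $A$ itself is a nonzero finitely generated PS module. For $(iii)\Rightarrow(i)$, I would take any nonzero finitely generated ideal $I\subseteq A$: it is torsion-free, hence PS by hypothesis; being rank one and PS, it is locally cyclic by Proposition \ref{7}(ii); and a finitely generated locally cyclic module is cyclic, so $I$ is principal, giving Bezout.

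The substance of the theorem lies in $(ii)\Rightarrow(i)$. Given a Prufer domain $A$ with a nonzero finitely generated PS module $M$, my plan has two stages: first produce a finitely generated rank-one PS ideal of $A$ so that Proposition \ref{7}(iii) yields that $A$ is a PS domain, and then combine Prufer with PS to conclude Bezout.

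For the first stage, since $M$ is finitely generated and torsion-free over a Prufer domain, $M$ is projective; invoking the standard structure theorem for finitely generated projective modules over a Prufer domain (see \cite{FS}), $M$ splits as a finite direct sum of finitely generated invertible ideals. Take any such summand $I\subseteq M$: as a direct summand it is an RD-submodule, hence PS by Proposition \ref{2}(ii), and it is visibly finitely generated and of rank one, so Proposition \ref{7}(iii) applies and $A$ is a PS domain.

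For the second stage, I would show that a Prufer PS domain is Bezout. For any nonzero $a,b\in A$, the identity $(aA\cap bA)(aA+bA)=abA$ holding in Prufer domains together with the fact that finitely generated fractional ideals form a group there makes $aA\cap bA$ finitely generated; Theorem \ref{1}(v) applied to $M=A$ makes it locally cyclic; finitely generated plus locally cyclic forces cyclic, so $\mathrm{lcm}(a,b)$ exists and $A$ is a GCD domain. A Prufer GCD domain is Bezout, which finishes. The only step that is not pure bookkeeping from earlier results is the appeal to the Prufer structure theorem to extract the rank-one PS summand from $M$; that is what I expect to be the main obstacle.
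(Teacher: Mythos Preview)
Your proof is correct and follows the paper's approach on the crucial implication $(ii)\Rightarrow(i)$: both arguments invoke the structure theorem for finitely generated torsion-free modules over a Pr\"ufer domain to split off a rank-one direct summand, use Proposition~\ref{2}(ii) to see that this summand is PS, and then apply Proposition~\ref{7}(iii) to conclude that $A$ is a PS domain. The paper then simply cites \cite[Theorem 2.8]{Co} for the step ``Pr\"ufer $+$ PS $\Rightarrow$ Bezout'', whereas you supply a short self-contained argument via $aA\cap bA$ being finitely generated and locally cyclic; both are fine, and your version has the virtue of keeping everything internal to the paper's toolkit. One organizational difference: the paper closes the cycle $(iii)\Rightarrow(ii)\Rightarrow(i)\Rightarrow(iii)$, deducing Pr\"ufer in $(iii)\Rightarrow(ii)$ from the flatness of all ideals (Proposition~\ref{7}(ii) plus \cite[Theorem 9.10]{FS}), while you bypass $(ii)$ entirely and go straight from $(iii)$ to $(i)$ by showing every finitely generated ideal is principal. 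Your route is slightly more economical here since it avoids the detour through Pr\"ufer.
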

\begin{proof}
$(iii) \Rightarrow (ii)$. By part  $(ii)$ of Proposition \ref{7}, all ideals of $A$ are flat, hence $A$ is a Prufer domain by \cite[Theorem 9.10]{FS}.
$(ii) \Rightarrow (i)$. 
Since every  finitely generated torsion free module is isomorphic to a direct sum of finitely generated ideals (cf. \cite[Corollary 3.7.11]{WK}), we may assume that $A$ has a rank one finitely generated PS module, cf. part $(ii)$ of Proposition \ref{2}. By part $(iii)$ of Proposition \ref{7}, $A$ is a PS domain, so $A$ is a Bezout domain cf. \cite[Theorem 2.8]{Co}.
Finally $(i) \Rightarrow (iii)$ is part $(iii)$ of Proposition \ref{2}.
\end{proof}

Over a GCD domain, the PS module property is   local.

\begin{proposition}  \label{43}
Let $A$ be an GCD domain and $M$ a torsion-free $A$-module. Then $M$ is PS iff $M_P$ is a PS module over $A_P$ for each maximal ideal $P$ of $A$.
\end{proposition}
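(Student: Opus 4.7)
The forward direction is immediate from Proposition \ref{2}(vi) applied to the multiplicative set $S = A \setminus P$, since the localization of a GCD domain at a prime is again a (GCD) domain and $M_P = M \otimes_A A_P$ is a torsion-free $A_P$-module.

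For the reverse direction, suppose $M_P$ is PS over $A_P$ for every maximal ideal $P$. Given an equation $ax = by$ with $a, b \in A - \{0\}$ and $x, y \in M - \{0\}$, Remark \ref{21} lets me cancel out the common factor $\gcd_A(a, b)$ (which exists in the GCD domain $A$), reducing to the case $\gcd_A(a, b) = 1$. I observe that in this reduced case it is enough to prove $b \mid x$ in $M$: writing $x = bz$, the equation forces $y = az$ by torsion-freeness, and then setting $c = 1$, $d = b$, $e = a$ gives the required refinement. So the whole problem comes down to establishing $b \mid x$ in $M$.

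The plan is to extract local divisibility from the PS hypothesis on each $M_P$ and glue using a conductor ideal argument. I will use the standard fact that for any multiplicative set $S$ in a GCD domain $A$, the ring $A_S$ is again GCD and coprimality is preserved; concretely, if $e/s$ divides both $a$ and $b$ in $A_S$, clearing denominators produces $t \in S$ with $e \mid ta$ and $e \mid tb$ in $A$, whence $e \mid t\gcd_A(a,b) = t$, so $e/s$ is a unit in $A_S$. Therefore $\gcd_{A_P}(a, b) = 1$ for every maximal $P$. Since $M_P$ is PS over $A_P$, Proposition \ref{2}(iv) applied in $A_P$ yields $b \mid x$ in $M_P$. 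Clearing denominators, this says: for each maximal $P$ there exists $s_P \in A \setminus P$ with $s_P x \in bM$.

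Hence the conductor ideal $(bM : x) = \{s \in A : sx \in bM\}$ meets $A \setminus P$ for every maximal ideal $P$, so it is contained in no maximal ideal and must equal $A$. In particular $1 \in (bM : x)$, i.e., $x \in bM$, which finishes the argument. The main obstacle is really just the gluing step from local to global divisibility; it is what forces me to work with the conductor $(bM : x)$ rather than trying to piece together elements $z_P \in M_P$ with $x = bz_P$.
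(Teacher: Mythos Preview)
Your proof is correct and follows essentially the same approach as the paper: reduce to the coprime case via Remark \ref{21}, observe that coprimality passes to each $A_P$, apply Proposition \ref{2}(iv) locally to get $b\mid x$ in $M_P$, and then glue. The only cosmetic difference is in the gluing step: the paper phrases it as $x/b\in\bigcap_P M_P=M$ inside the divisible hull, whereas you use the equivalent conductor-ideal formulation $(bM:x)\not\subseteq P$ for all maximal $P$.
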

\begin{proof}
$(\Rightarrow)$ follows from part $(vi)$ of Proposition \ref{2}.
$(\Leftarrow).$ Suppose that $b|ax$  with $a,b\in A-\{0\}$ and $x\in M-\{0\}$.  Dividing by $gcd(a,b)$, we may assume that $aA\cap bA=abA$ (see Remark \ref{21}). Let $P$ be a maximal ideal of $A$. Then $aA_P\cap bA_P=abA_P$, so $a,b$ are coprime in $A_P$. As $M_P$ is a PS module over $A_P$, we get $x/b\in M_P$, cf. part $(iv)$ of Proposition \ref{2}. As this fact happens for each $P$, we get  $x/b\in \cap_{P} M_P = M$.
\end{proof}

\begin{remark}
If $A$ is a Dedekind domain which is not a PID (e.g. $\mathbb{Z}[\sqrt{-5}]$), then $A_P$ is a PS domain for all maximal ideals $P$ of $A$, but $A$ is not a PS domain.
\end{remark}

\section{Scalar restriction}
%---------------------------------------------------

The following scalar restriction result extends Example \ref{15}.

\begin{theorem}\label{18}  
Let $A\subseteq B$ be an extension of domains such that $bB\cap A$ is a locally cyclic ideal  of $A$ for each $b\in B$.
If $M$ is  a PS module over $B$, then $M$ is  PS as an $A$-module.
\end{theorem}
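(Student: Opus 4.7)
The plan is to verify condition $(ii)$ of Theorem \ref{1} for $M$ as an $A$-module. So I would start with nonzero $a,b \in A$ and $y \in M - \{0\}$ such that $a \mid by$ in $M$ (over $A$). Since the scalars $a,b$ still lie in $B$ and $M$ is PS over $B$, Theorem \ref{1}$(ii)$ applied over $B$ produces a factorization $a = c'd'$ in $B$ together with elements $e \in B$ and $z \in M$ such that $b = c'e$ and $y = d'z$. The task is then to descend this factorization to $A$.

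Here is where the hypothesis on $bB\cap A$ enters. The $A$-submodule $c'B \cap A$ of $A$ is a locally cyclic ideal by assumption, and both $a = c'd'$ and $b = c'e$ belong to it. The $A$-submodule $aA + bA$ of $c'B \cap A$ is finitely generated, hence contained in a cyclic submodule $cA$ for some $c \in c'B \cap A$; in particular $c \mid a$ and $c \mid b$ in $A$, and $c = c'f$ for some $f \in B$. Write $a = cd$ with $d \in A$. Then in $B$ we have $c'd' = a = cd = c'fd$, and since $B$ is a domain with $c' \neq 0$ (because $a \neq 0$), we may cancel $c'$ to obtain $d' = fd$. Consequently
\[
y \;=\; d'z \;=\; f d z \;=\; d \cdot (fz),
\]
where $fz \in M$ because $f \in B$ and $M$ is a $B$-module. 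Thus $d \mid y$ in $M$ viewed over $A$, while $c \mid b$ in $A$ and $a = cd$, establishing condition $(ii)$ of Theorem \ref{1}.

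The only points that need a brief sanity check are non-vanishing (clear from $a \neq 0$, since $a \in cA \cap c'B$ forces both $c$ and $d$ to be nonzero) and that the $A$-submodule $aA+bA$ really sits inside the locally cyclic ideal $c'B \cap A$ (immediate from $a, b \in c'B$ and $a, b \in A$). The main conceptual step — and the only place the hypothesis on the extension $A \subseteq B$ is used — is the passage from the pair of elements $a, b$ of $c'B \cap A$ to a single common $A$-divisor $c$ lying in $c'B$, which is exactly what local cyclicity of $c'B \cap A$ delivers. After that, the cancellation $d' = fd$ in the domain $B$ automatically transfers the $B$-divisibility $d' \mid y$ into $A$-divisibility $d \mid y$ without requiring $f$ itself to belong to $A$.
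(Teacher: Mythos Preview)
Your proof is correct. It differs from the paper's in which characterization from Theorem~\ref{1} you verify: the paper checks condition~$(vi)$, showing $(aM:_A x)$ is locally cyclic by taking two elements $a_1,a_2$ of this ideal, trapping them in a principal $B$-ideal $bB$ with $b\in (aM:_B x)$ via the PS property over $B$, and then using local cyclicity of $bB\cap A$ to find a common $A$-divisor $c$ in the colon ideal. You instead verify condition~$(ii)$, producing a $B$-factorization $a=c'd'$ first and then descending the \emph{single} factor $c'$ to an $A$-divisor $c$ of both $a$ and $b$ via local cyclicity of $c'B\cap A$; the cancellation $d'=fd$ in $B$ then transports divisibility of $y$ back to $A$ for free. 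The underlying mechanism---use local cyclicity of an ideal $\beta B\cap A$ to trade a $B$-divisor for an $A$-divisor---is the same in both, but your argument is arguably a bit more direct since it avoids passing through the colon-ideal formulation and handles one factorization at a time rather than a pair of colon-ideal elements.
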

\begin{proof}
Let $a\in A-\{0\}$ and $x\in M-\{0\}$. 
By part $(vi)$ of Theorem \ref{1}, it suffices to show that $(aM:_A x)$ is a locally cyclic ideal of $A$.  Let $a_1,a_2\in (aM:_A x)$. As $(aM:_B x)$  is locally cyclic (cf. part $(vi)$ of Theorem \ref{1}), we have $a_1,a_2\in bB$ for some $b\in (aM:_B x)$. By hypothesis $bB\cap A$ is locally cyclic, so $a_1,a_2\in  cA\subseteq bB$ for some $c\in A$.
 We get $c\in  bB\cap A \subseteq A\cap (aM:_B x)=(aM:_A x)$.
\end{proof}

%\begin{wremark}It would be nice to have: If $M$ is  a PS $B$-module and $B$ is  a PS $A$-module, then $M$ a PS $A$-module.\end{wremark}

Let $A$ be a domain and $S$ a saturated multiplicative subset of $A$.
Recall  that $S$ is a {\em splitting set} if 
$aA_S\cap A$ is a principal ideal for each $a\in A$,
cf. Definition 2.1 and Theorem 2.2 in \cite{AAZ}.
We state an  immediate consequence of Theorem \ref{18}.

\begin{corollary} \label{19}  
Let $A$ be  domain, $S$ a splitting multiplicative subset of $A$     and $M$   a PS module over $A_S$. Then $M$ is a PS module over $A$.
\end{corollary}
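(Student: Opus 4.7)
The plan is to derive this as an immediate consequence of Theorem \ref{18} applied to the extension $A \subseteq B := A_S$. To invoke that theorem, I need to verify its hypothesis: for every $b \in A_S$, the ideal $bA_S \cap A$ of $A$ is locally cyclic.

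First I would reduce from an arbitrary $b \in A_S$ to an element of $A$. Write $b = a/s$ with $a \in A$ and $s \in S$; since $s$ is a unit in $A_S$, we have $bA_S = (a/s)A_S = aA_S$, and therefore $bA_S \cap A = aA_S \cap A$. Now the definition of a splitting set (see \cite{AAZ}) gives exactly that $aA_S \cap A$ is principal in $A$ for every $a \in A$. A principal ideal is trivially locally cyclic, so the hypothesis of Theorem \ref{18} is met.

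Having verified the hypothesis, Theorem \ref{18} immediately yields that $M$, which is a PS module over $B = A_S$ by assumption, is also a PS module over $A$, which is the desired conclusion. There is no real obstacle here: the whole content of the corollary is packaged in Theorem \ref{18}, and the splitting set definition was tailored so that the key hypothesis holds by fiat, once one observes the trivial reduction $bA_S = aA_S$ for $b \in A_S$.
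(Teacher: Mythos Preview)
Your proposal is correct and follows exactly the approach intended by the paper, which states only that the corollary is an ``immediate consequence of Theorem \ref{18}.'' You have simply spelled out the one-line verification that the splitting-set hypothesis makes $bA_S\cap A$ principal (hence locally cyclic) for every $b\in A_S$, via the reduction $bA_S=aA_S$ for $b=a/s$.
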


Let $A$ be an atomic  domain and $S$ the multiplicative set of $A$ generated by all nonprime atoms. The fraction ring $A_S$ is a UFD because each nonzero nonunit of $A_S$ is a product of prime elements.

\begin{theorem}\label{601}  
With notation above, the following assertions hold.

$(i)$ If $M$ is  a PS module over $A$, then $M$ is PS module over $A_S$.

$(ii)$ Conversely, if $N$ is  a PS module over $A_S$, then $N$ is  PS  module over $A$.
\\ Hence the PS modules over $A$ are exactly the PS modules over the UFD $A_S$.
\end{theorem}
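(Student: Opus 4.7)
For part $(i)$, the plan is to recognize that $M$ is already an $A_S$-module and then invoke part $(vi)$ of Proposition \ref{2}. The key input is Proposition \ref{511}: if $c$ is any nonprime atom of $A$, then multiplication by $c$ on the PS module $M$ is surjective (since $M=cM$) and injective (since $M$ is torsion-free), hence bijective. Consequently every element of $S$ acts bijectively on $M$, so $M$ acquires a canonical $A_S$-module structure and the natural map $M\to M_S=M\otimes_A A_S$ is an isomorphism of $A_S$-modules. Then Proposition \ref{2}(vi) shows that $M_S$, and therefore $M$ itself, is a PS module over $A_S$.

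For part $(ii)$, the plan is to appeal to Corollary \ref{19} (scalar restriction along a splitting set) once we verify that $S$ is a splitting subset of $A$, i.e., $aA_S\cap A$ is principal for every $a\in A-\{0\}$. Given such an $a$, atomicity of $A$ yields a factorization $a=p_1\cdots p_m\cdot s$ with $p_1,\ldots,p_m$ prime atoms of $A$ and $s\in S$; since $s$ becomes a unit in $A_S$, we have $aA_S=p_1\cdots p_m A_S$. A standard divisibility argument then reduces $aA_S\cap A$ to $p_1\cdots p_m A$: if $x\in p_1\cdots p_m A_S\cap A$, write $tx=p_1\cdots p_m r$ with $t\in S$ and $r\in A$, and peel off the primes $p_i$ one at a time, using at each step that $p_i\nmid t$. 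With $S$ established as a splitting set, Corollary \ref{19} applies verbatim to conclude that any PS module over $A_S$ is a PS module over $A$.

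The only point that requires a brief justification is the claim that no prime atom $p$ of $A$ divides any element of $S$. This follows from the elementary observation that an irreducible element divisible by a prime must be associate to that prime (and thus itself prime): if $p\mid c$ with $c$ a nonprime atom, then $c=up$ for some unit $u$, making $c$ prime, a contradiction. Since any element of $S$ is a product of nonprime atoms, no prime of $A$ divides it, and the peeling argument in part $(ii)$ goes through. With this mild verification in place, both parts reduce to results already developed in the paper, so I do not anticipate any substantive obstacle.
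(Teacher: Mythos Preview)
Your proof is correct and follows essentially the same approach as the paper: part $(i)$ uses Proposition~\ref{511} to see that $M$ is already an $A_S$-module and then invokes Proposition~\ref{2}(vi), while part $(ii)$ verifies that $S$ is a splitting set and applies Corollary~\ref{19}. Your argument for $(ii)$ is simply a more explicit version of the paper's terse justification (the paper states $pA_S\cap A=pA$ for primes $p$ and that every nonzero nonunit of $A_S$ is a product of such primes, leaving the peeling argument implicit).
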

\begin{proof}
$(i)$. By Proposition \ref{511}, $M$ is a module over $A_S$. Hence 
$M=M\otimes_A A_S$ is a PS module over $A_S$ by part $(vi)$ of Proposition \ref{2}. $(ii)$. $S$ is a splitting set because $pA_S \cap A=pA$ for each prime element $p\in A$ and every nonzero nonunit of $A_S$ is, up to unit multiplication, a product of such primes. Apply Corollary \ref{19}.
\end{proof}

\begin{remark}
Using Remark \ref{21}, it is easy to see that a torsion-free module $M$ over a UFD $A$ is a PS module iff for each prime element $p$ of $A$,

$p|ax$ with $a\in A$ and $x\in M$ implies $p|a$ or $p|x$.

\end{remark}

\begin{remark}
Let $A$ be an atomic  domain whose atoms are   nonprime. It follows from Theorem \ref{601} and its proof that any PS module over $A$ is divisible.
See also Example \ref{503}.
\end{remark}

We recall the following fact \cite[Corollary 6.6 and Proposition 6.8]{F}.

\begin{proposition} \label{602}
Let $A$ be a UFD. Then every flat overring of $A$ is a fraction ring of $A$. 
\end{proposition}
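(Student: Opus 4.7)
The plan is to identify the localizing set explicitly: let $S := \{a \in A - \{0\} : a \in B^{\times}\}$, a saturated multiplicative subset of $A$. The inclusion $A_S \subseteq B$ is immediate, so the real content is the reverse inclusion $B \subseteq A_S$.

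To prove it, take $\beta \in B$. Since $A$ is a UFD, we may write $\beta = b/a$ in the quotient field $K$ with $a \in A - \{0\}$, $b \in A$, and $\gcd(a,b) = 1$. It suffices to show that every prime factor $p$ of $a$ lies in $B^{\times}$, because then $a \in S$ and hence $\beta \in A_S$. Suppose, for contradiction, that some prime $p \mid a$ fails to be a unit in $B$. Then $pB \neq B$, so we may pick a maximal ideal $\mathfrak{m}$ of $B$ with $p \in \mathfrak{m}$, and set $\mathfrak{p} = \mathfrak{m} \cap A$, so that $p \in \mathfrak{p}$.

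Now invoke the classical Richman characterization of flat overrings: since $A \to B$ is flat, one has $B_{\mathfrak{m}} = A_{\mathfrak{p}}$. Thus $\beta \in B \subseteq B_{\mathfrak{m}} = A_{\mathfrak{p}}$, so $\beta = c/s$ for some $c \in A$ and $s \in A - \mathfrak{p}$. Combined with $\beta = b/a$ this yields $bs = ca$; since $\gcd(a,b) = 1$ in the UFD $A$, we conclude $a \mid s$. But then $p \mid a \mid s$ forces $s \in \mathfrak{p}$, a contradiction.

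The principal obstacle is the flat-overring equality $B_{\mathfrak{m}} = A_{\mathfrak{m} \cap A}$, which is really the heart of the proposition and is what the cited results in \cite{F} supply. Once this is granted, the UFD hypothesis is used only to produce reduced fractions $b/a$ with coprime numerator and denominator, and then to upgrade ``$\beta$ admits some denominator outside $\mathfrak{p}$'' into ``the canonical denominator $a$ lies outside $\mathfrak{p}$''; in a general integrally closed domain this last step fails because reduced forms need not exist.
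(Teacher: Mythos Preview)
Your argument is correct. Note, however, that the paper does not give its own proof of this proposition: it simply recalls the statement as a known fact with the citation \cite[Corollary~6.6 and Proposition~6.8]{F}. In Fossum the result is reached through the general theory of Krull domains, where flat overrings are identified as sub-intersections $\bigcap_{P\in\Delta} A_P$ over a subset $\Delta$ of the height-one primes; when $A$ is a UFD every height-one prime is principal, say $P=(p)$, and such an intersection is then visibly the localization $A_S$ at the multiplicative set generated by the primes $p$ with $(p)\notin\Delta$.

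Your route via Richman's characterization $B_{\mathfrak m}=A_{\mathfrak m\cap A}$ of flat overrings is a legitimate and more self-contained alternative for the UFD case. You rightly identify that equality as the imported ingredient; once it is granted, reducing $\beta$ to a coprime fraction $b/a$ and deriving $a\mid s$ from $bs=ca$ is exactly the efficient way to finish, and your closing remark pinpoints precisely why the UFD hypothesis is used.
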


\begin{proposition} 
Let $A$ be an atomic  domain and $S$ the multiplicative set of $A$ generated by all nonprime atoms. Then the overrings of $A$ which are PS modules over $A$ are the fraction rings $A_T$ with $T$ a multiplicative set containing $S$.
\end{proposition}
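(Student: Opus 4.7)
The strategy is to prove the two inclusions separately, using Theorem~\ref{601} and Corollary~\ref{19} as the main tools, with Proposition~\ref{602} doing the heavy lifting in the nontrivial direction.

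For the easy direction, fix a multiplicative subset $T$ of $A$ with $S \subseteq T$. Then $A_S \subseteq A_T$, and $A_T$ is the localization of $A_S$ at the image of $T$; in particular, $A_T$ is flat over $A_S$. Since $A_S$ is a UFD and hence a PS domain, Corollary~\ref{5} gives that $A_T$ is a PS module over $A_S$. The proof of Theorem~\ref{601}(ii) shows that $S$ is a splitting multiplicative subset of $A$, so Corollary~\ref{19} then yields that $A_T$ is a PS module over $A$.

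Conversely, let $B$ be an overring of $A$ that is a PS $A$-module, and set $T := \{a \in A : a \text{ is a unit in } B\}$, which is a saturated multiplicative subset of $A$. First I would show $S \subseteq T$, so that $A_T \subseteq B$: for each nonprime atom $c$ of $A$, Proposition~\ref{511} applied to the PS $A$-module $B$ gives $cB = B$, and since $1 \in B$ this forces $c$ to be a unit in $B$. Next, by Theorem~\ref{601}(i), $B$ is a PS module over $A_S$; as a nonzero overring of $A_S$ it has $A_S$-rank one, so Proposition~\ref{7}(ii) implies $B$ is locally cyclic and hence flat over $A_S$. Applying Proposition~\ref{602} to the UFD $A_S$, we may write $B = (A_S)_{T'}$ for some multiplicative subset $T' \subseteq A_S$.

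For the reverse inclusion $B \subseteq A_T$, take any $b \in B$ and write it as $(a_1/s_1)/(a_2/s_2)$ with $a_1,a_2 \in A$, $s_1,s_2 \in S$, and $a_2/s_2 \in T'$; since $a_2/s_2$ is a unit in $B$ and $s_2 \in S$ is too, so is $a_2$, whence $a_2 \in T$ and $b = (a_1 s_2)/(a_2 s_1) \in A_T$. The main obstacle is precisely this transition from \emph{fraction ring of $A_S$} to \emph{fraction ring of $A$}: Proposition~\ref{602} only produces a multiplicative subset of $A_S$, and the cleanup $T := A \cap B^\ast$ is what makes $B = A_T$ work out.
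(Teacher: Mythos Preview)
Your argument is correct and follows essentially the same route as the paper: in both directions the core tools are Theorem~\ref{601}, Proposition~\ref{7}(ii), and Proposition~\ref{602}, and your converse is in fact more explicit than the paper's in spelling out why a fraction ring of $A_S$ can be rewritten as $A_T$ with $S\subseteq T$. One small slip: in the easy direction you invoke Corollary~\ref{5} (free modules), but $A_T$ is merely flat over $A_S$, so you should cite Corollary~\ref{8} instead---exactly as the paper does.
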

\begin{proof}
Let $B$ be an overring  of $A$ which is a PS module  over $A$.
By Theorem \ref{601} and part $(ii)$ of Proposition \ref{7}, $B$ is a flat overring of $A_S$, so $B$ is a fraction ring of $A_S$, cf. Proposition \ref{602}. Conversely, let $T$ be a multiplicative set of $A$ containing $S$.
Then $A_T$ is a PS module over $A_S$, cf. Corollary \ref{8}.
Apply Theorem \ref{601}.
\end{proof}

\section{Flat modules}

\begin{proposition} \label{4}  
Any inductive limit of PS modules  is a PS module.
\end{proposition}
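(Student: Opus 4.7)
The plan is to verify characterization (ii) of Theorem \ref{1} directly. Let $(M_i, \varphi_{ij})_{i \in I}$ be a directed system of PS $A$-modules with inductive limit $M = \varinjlim M_i$ and structural maps $\varphi_i : M_i \to M$.

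First, I would check that $M$ is torsion-free, so that the PS definition applies. If $ax = 0$ in $M$ for $a \in A-\{0\}$ and $x \in M$, write $x = \varphi_i(x_i)$. Then $\varphi_i(ax_i) = 0$, so there exists $j \geq i$ with $a\,\varphi_{ij}(x_i) = 0$ in $M_j$, and torsion-freeness of $M_j$ forces $\varphi_{ij}(x_i) = 0$, whence $x = 0$.

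Next, suppose $a, b \in A-\{0\}$ and $y \in M-\{0\}$ with $a \mid by$ in $M$, say $az = by$ for some $z \in M$. Pick an index $k$ and elements $y', z' \in M_k$ with $y = \varphi_k(y')$ and $z = \varphi_k(z')$ (possible after passing to a common upper bound of the two indices that produce $y$ and $z$). Since $\varphi_k(az' - by') = 0$ in $M$, there is some $\ell \geq k$ with $a\,\varphi_{k\ell}(z') = b\,\varphi_{k\ell}(y')$ in $M_\ell$. Write $y_\ell := \varphi_{k\ell}(y')$ and $z_\ell := \varphi_{k\ell}(z')$. Since $\varphi_\ell(y_\ell) = y \neq 0$ we have $y_\ell \neq 0$, and then $az_\ell = by_\ell \neq 0$ forces $z_\ell \neq 0$.

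Since $M_\ell$ is PS, the equation $az_\ell = by_\ell$ has a refinement: there exist $c, d, e \in A$ and $u \in M_\ell$ with $a = ce$, $b = cd$, $z_\ell = du$, $y_\ell = eu$. Applying $\varphi_\ell$ yields $y = e\,\varphi_\ell(u)$, so $\varphi_\ell(u) \mid y$ in $M$; combined with $a = ce$ and $c \mid b$, this is precisely the factorization required by Theorem \ref{1}(ii). The only real obstacle is the bookkeeping needed to bring the whole relation $az = by$ down to a common stage $M_\ell$; torsion-freeness of each $M_\ell$ and of $M$ ensures that nothing accidentally collapses to zero along the way.
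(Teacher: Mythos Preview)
Your proof is correct and follows essentially the same approach as the paper's: lift the relation $ax=by$ to a single stage $M_\ell$ of the system, apply the PS property there to obtain a refinement, and push it forward via $\varphi_\ell$. Your treatment is in fact more careful than the paper's on two points---you verify torsion-freeness of the limit explicitly, and you correctly pass to a larger index $\ell$ to make the lifted relation hold (the paper tacitly assumes $ax'=by'$ already holds at the first stage where both elements lift).
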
 
\begin{proof}
Let $A$ be a  domain, $S=\{f_{ij}:M_{i}\rightarrow M_j\ |\ i\leq j,\ i,j\in I \}$  an inductive system of PS modules over $A$ and let $M$ be the limit of $S$ with canonical maps $\{ f_i:M_i\rightarrow M\ |\ i\in I\}$. As every $M_i$ is torsion-free so is $M$. 
Suppose that $ax=by$ with $a,b\in A-\{0\},\ x,y\in M-\{0\}$.
Then $ax'=by'$ for some $i\in I$, $x',y'\in M_i$ with $f_i(x')=x$, $f_i(y')=y$. As $M_i$ is a PS module, we have a refinement  
$$
\begin{array}{ccccc}
 & \vline & a & x' \\
\hline 
 b & \vline & c & d \\
 y' & \vline & e & z' \\
\end{array}  \ \ \ \mbox{ with } c,d,e\in A,\ z'\in M_i.$$
which gives the refinement
$
\begin{array}{ccccc}
 & \vline & a & x \\
\hline 
 b & \vline & c & d \\
 y & \vline & e & f_i(z'). \\
\end{array}
$
\end{proof}

\begin{theorem} \label{45}
Let $A$ be a   domain. A direct sum of torsion-free $A$-modules is PS iff each term is PS.
\end{theorem}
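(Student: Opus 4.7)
The plan is as follows. The direction $(\Rightarrow)$ is immediate: each $M_i$ is a direct summand of $M:=\bigoplus_j M_j$, hence an RD-submodule, so it is PS by part $(ii)$ of Proposition \ref{2}.

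For $(\Leftarrow)$, suppose each $M_i$ is PS. In any equation $ax=by$ with $x,y\in M-\{0\}$, both $x$ and $y$ have finite support, so the equation lives inside $\bigoplus_{i\in F}M_i$ for some finite $F\subseteq I$; any refinement produced there extends by zeros to a refinement in $M$. An obvious induction on $|F|$ then reduces matters to the case $M=M_1\oplus M_2$. Write $x=(x_1,x_2)$ and $y=(y_1,y_2)$. If $x_1=0$, torsion-freeness of $M_1$ together with $b\ne 0$ forces $y_1=0$, so the equation lives in $M_2$ and a refinement there pads with $0$ in the first coordinate; the case $x_2=0$ is symmetric. So we may assume $x_1,x_2,y_1,y_2$ are all nonzero.

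Apply PS of $M_1$ to $ax_1=by_1$ to obtain $c_1,d_1,e_1\in A$ and $z_1\in M_1-\{0\}$ with $a=c_1e_1$, $b=c_1d_1$, $x_1=d_1z_1$, $y_1=e_1z_1$. Substituting into $ax_2=by_2$ and cancelling the nonzero scalar $c_1$ in the torsion-free module $M_2$ gives $e_1x_2=d_1y_2$, so $d_1\mid e_1x_2$ in $M_2$. Theorem \ref{1} $(ii)$ applied to the PS module $M_2$ then splits $d_1=pq$ with $p,q\in A$, $p\mid e_1$ in $A$, and $q\mid x_2$ in $M_2$. Setting $c:=c_1p$, $d:=q$, $e:=e_1/p$ and $z:=(pz_1,\,x_2/q)\in M-\{0\}$, one verifies directly (using $x_1=pqz_1$ and $e_1x_2=d_1y_2$) that $a=ce$, $b=cd$, $x=dz$ and $y=ez$, giving the desired refinement.

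The conceptual obstacle is that the refinements produced by PS of $M_1$ and $M_2$ in isolation need not share a common factorization $(c,d,e)$ of the pair $(a,b)$. The key move is to fix the $M_1$-refinement first, read off the forced identity $e_1x_2=d_1y_2$ in $M_2$, and invoke PS of $M_2$ a \emph{second} time to split $d_1$ in a way that lets the $M_1$-refinement be sharpened compatibly with the $M_2$-side.
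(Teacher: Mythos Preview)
Your proof is correct and follows essentially the same strategy as the paper's: reduce to two summands, apply the PS property of the first summand, then refine using the PS property of the second. The only cosmetic differences are that the paper reduces arbitrary direct sums to finite ones via Proposition~\ref{4} (inductive limits) rather than your direct finite-support argument, and the paper phrases both applications in the form of Theorem~\ref{1}$(ii)$ (``$a\mid b(x,y)$ implies $a=cd$ with $c\mid b$, $d\mid (x,y)$'') rather than mixing Definition~\ref{0} with Theorem~\ref{1}$(ii)$ as you do; the computations are otherwise the same.
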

\begin{proof} 
 $(\Rightarrow)$  follows from part $(ii)$ of Proposition \ref{2}.
$(\Leftarrow)$ Using an induction and Proposition \ref{4}, it suffices to consider 
a direct sum $M\oplus N$ of  two PS modules.   
Let $a,b\in A-\{0\}$ and $(x,y)\in M\oplus N$ such that
$a|b(x,y)$. As $M$ is PS, we have $c|a,b$ and $(a/c)|x$ for some $c\in A-\{0\}$. Set $a'=a/c$ and $b'=b/c$. Then $a'|b'y$. As $N$ is PS, we have 
$c'|a',b'$ and $(a'/c')|y$ for some $c'\in A-\{0\}$. It follows that
$(cc')|a,b$ and $(a/(cc'))|(x,y)$.
\end{proof}

\begin{corollary} \label{5}  
Any free module over a PS domain is a PS module.
\end{corollary}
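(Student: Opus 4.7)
The plan is to derive this immediately from Theorem \ref{45} together with the definition of a PS domain. Recall that by the discussion following Definition \ref{0}, the domain $A$ is a PS domain precisely when $A$ is a PS module over itself. Since any free $A$-module is, by definition, isomorphic to a direct sum $\bigoplus_{i\in I} A$ of copies of $A$, and each summand $A$ is a torsion-free PS module over $A$, the $(\Leftarrow)$ direction of Theorem \ref{45} applies and yields that $\bigoplus_{i\in I} A$ is PS.

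There is essentially no obstacle here; the corollary is a one-line consequence of the two previously established facts. The only thing worth checking is that Theorem \ref{45} was stated for direct sums indexed by an arbitrary set and not only for finite sums. Inspecting its proof, the finite case is handled directly and the infinite case is reduced to the finite case via Proposition \ref{4} on inductive limits (since an arbitrary direct sum is the filtered colimit of its finite sub-sums). So the result applies to a free module of any rank.

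Thus the proof reduces to: assuming $A$ is a PS domain, $A$ itself is a PS $A$-module, hence by Theorem \ref{45} every direct sum of copies of $A$ is PS, i.e.\ every free $A$-module is PS.
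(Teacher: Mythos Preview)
Your proof is correct and matches the paper's approach exactly: the paper states Corollary \ref{5} immediately after Theorem \ref{45} with no separate proof, so it is understood to follow directly from that theorem together with the fact that a PS domain is a PS module over itself. Your additional remark checking that Theorem \ref{45} covers arbitrary index sets (via Proposition \ref{4}) is accurate and makes explicit what the paper leaves implicit.
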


\begin{theorem} \label{102}
If $A$ is a domain, $M$ a PS $A$-module and $P$ a flat $A$-module then, $M\otimes_A P$ is PS module.
\end{theorem}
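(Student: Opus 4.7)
The plan is to reduce the general flat $P$ to the finite free case via Lazard's theorem, then reduce the finite free case to a direct sum, using tools already in place.

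First I would verify that $M \otimes_A P$ is torsion-free, so that the PS definition applies. For any nonzero $a \in A$, multiplication by $a$ on $M$ is injective (since $M$ is torsion-free), and tensoring with the flat module $P$ preserves injectivity, so multiplication by $a$ on $M \otimes_A P$ is injective. Hence $M \otimes_A P$ is torsion-free.

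Next I would invoke Lazard's Theorem to write $P$ as the inductive limit of a directed system $\{F_i\}_{i \in I}$ of finitely generated free $A$-modules, with transition maps $g_{ij} : F_i \to F_j$. Since tensor product commutes with inductive limits, $M \otimes_A P$ is the inductive limit of the system $\{M \otimes_A F_i\}$ with transition maps $1_M \otimes g_{ij}$. Each $F_i \cong A^{n_i}$ for some $n_i$, so $M \otimes_A F_i \cong M^{n_i}$, which is a finite direct sum of copies of the PS module $M$.

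By Theorem \ref{45}, each $M^{n_i}$ is a PS module. Finally, by Proposition \ref{4}, the inductive limit $M \otimes_A P$ of this system of PS modules is itself a PS module, completing the proof.

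I do not expect any serious obstacle here: the two substantive inputs (Theorem \ref{45} and Proposition \ref{4}) are already available, and Lazard's theorem plus the compatibility of tensor products with colimits packages $P$ into exactly the shape these inputs require. The only point that needs a line of care is the torsion-freeness of $M \otimes_A P$, handled above by flatness of $P$.
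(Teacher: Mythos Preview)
Your proof is correct and follows essentially the same approach as the paper: Lazard's theorem reduces to finite free modules, Theorem \ref{45} handles finite direct sums of $M$, and Proposition \ref{4} handles the inductive limit. Your explicit verification that $M\otimes_A P$ is torsion-free is a welcome addition that the paper leaves implicit.
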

\begin{proof} 
By Lazard's Theorem \cite[Theorem 1.2]{L}, $P$ is an inductive limit of free modules, so $M\otimes_A P$ is an inductive limit of direct sums of copies of $M$, hence $M\otimes_A P$ is PS, cf. Proposition \ref{4} and Theorem \ref{45}.
\end{proof}

\begin{corollary}  \label{8}  
Any flat module over a PS domain is a PS module.
\end{corollary}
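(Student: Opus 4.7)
The plan is to deduce this immediately from Theorem \ref{102} by taking the PS module to be $A$ itself. Since $A$ is a PS domain, the remark just after Definition \ref{0} tells us that $A$ is a PS module over itself. Thus, if $P$ is any flat $A$-module, Theorem \ref{102} (with $M = A$) guarantees that $A \otimes_A P$ is a PS $A$-module.

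The only step worth spelling out is the canonical isomorphism $A \otimes_A P \cong P$ of $A$-modules, under which the PS property transports. In particular, $P$ is torsion-free (which is already built into being PS, but is also an independent consequence of flatness over a domain), and every equation $ax = by$ in $P$ corresponds to the same equation in $A \otimes_A P$, where a refinement exists by Theorem \ref{102}; this refinement pulls back to a refinement in $P$ via the isomorphism.

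There is no real obstacle here: the entire content of the corollary is packaged in Theorem \ref{102}, and this is why the statement appears as a corollary rather than a standalone theorem. One could equally well avoid invoking Theorem \ref{102} and instead argue directly via Lazard's theorem: write $P$ as an inductive limit of finitely generated free $A$-modules, observe that each such free module is PS by Corollary \ref{5}, and conclude by Proposition \ref{4} that $P$ is PS. But the one-line route through Theorem \ref{102} is cleaner.
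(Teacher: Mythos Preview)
Your proof is correct and matches the paper's approach exactly: the paper's proof consists of the single sentence ``Apply Theorem \ref{102} for $M=A$.'' Your additional remarks about the canonical isomorphism $A\otimes_A P\cong P$ and the alternative Lazard route are accurate elaborations, but the core argument is identical.
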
 
\begin{proof}
Apply Theorem \ref{102} for $M=A$.
\end{proof}

\begin{example} \label{201} 
Over a UFD, a PS module is not necessarily flat. 
Let $A$ be a PS domain and $I$ an ideal of flat dimension $\geq 2$. 
Consider a short exact sequence of the form 
$0\rightarrow M\rightarrow F\rightarrow I\rightarrow 0$
where $F$ is a flat module. Then  $M$  is a PS module (Corollary  \ref{101} and Corollary \ref{8}) but not  flat. For a specific example, we may take $A=K[X,Y,Z]$ for some field $K$,  $I=(X,Y,Z)$ and  $M$ the first syzygy module of $I$.
\end{example} 

\begin{corollary} \label{14}  
The flat ideals of a PS domain are locally cyclic. 
\end{corollary}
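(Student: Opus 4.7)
The plan is to assemble the corollary from two results already in the paper: Corollary \ref{8} (flat modules over a PS domain are PS) and part $(ii)$ of Proposition \ref{7} (a rank-one PS module is locally cyclic).

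First, I would dispatch the trivial case: if $I = 0$, it is vacuously locally cyclic, so assume $I$ is a nonzero flat ideal of the PS domain $A$. Since $A$ is PS and $I$ is flat, Corollary \ref{8} tells us that $I$ is a PS $A$-module. As a nonzero ideal of the domain $A$, the module $I$ sits inside the quotient field $K$ and therefore has rank one (any two nonzero elements of $I$ are $K$-linearly dependent).

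Now apply part $(ii)$ of Proposition \ref{7}: every rank-one PS module is locally cyclic. This yields that $I$ is locally cyclic, which is the desired conclusion.

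There is essentially no obstacle here; the corollary is a one-line consequence of combining the two cited results. The only thing worth remarking is that we are genuinely using both hypotheses (flatness and the PS property of $A$): flatness of $I$ feeds into Corollary \ref{8} to upgrade $I$ to a PS module, and the rank-one condition, which comes for free since $I$ is an ideal, is what lets Proposition \ref{7}$(ii)$ kick in to conclude local cyclicity.
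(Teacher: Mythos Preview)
Your proof is correct and follows exactly the paper's own argument, which simply cites Corollary \ref{8} and Proposition \ref{7}. Your write-up is just a slightly more detailed unpacking of the same two-step reasoning.
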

\begin{proof}
Apply  Corollary \ref{8} and  Proposition \ref{7}.
\end{proof}

From Corollary \ref{14}, we recover the well-known fact that the flat ideals of a UFD are principal.
According to \cite[page 5]{AAZ0}, a domain $A$ is a {\em weak GCD domain}  if each pair $a$, $b\in A-\{0\}$ has a maximal common divisor $d$, that is with $gcd(a/d,b/d)=1$.  
The ACCP domains and the GCD domains are examples of weak GCD domains.

\begin{theorem} \label{12}  
Over a weak GCD domain, a direct product of PS modules is a PS module. 
\end{theorem}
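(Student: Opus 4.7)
The plan is to use characterization (ii) of Theorem \ref{1}: it suffices to show that whenever $a,b\in A-\{0\}$ and $x=(x_i)_{i\in I}\in M-\{0\}$ (with $M=\prod_i M_i$) satisfy $a\mid bx$ in $M$, we can factor $a=a_1a_2$ with $a_1\mid b$ in $A$ and $a_2\mid x$ in $M$. Torsion-freeness of $M$ is automatic from torsion-freeness of each $M_i$.

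First, I would use the weak GCD hypothesis to reduce to the coprime case. Let $d$ be a maximal common divisor of $a$ and $b$ and write $a=da'$, $b=db'$ with $\gcd(a',b')=1$ in $A$. Since $a\mid bx$, there exists $y=(y_i)\in M$ with $bx=ay$; cancelling $d$ in $A$ (using that $A$ is a domain and $M$ is torsion-free) yields $b'x=a'y$, i.e.\ $a'\mid b'x$ in $M$, and coordinatewise $a'\mid b'x_i$ in each $M_i$.

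Next, apply part $(iv)$ of Proposition \ref{2} in each factor: since $M_i$ is PS and $a',b'$ are coprime in $A$, we conclude $a'\mid x_i$ in $M_i$ for every $i$ (the case $x_i=0$ is trivial). Write $x_i=a'z_i$; the element $z_i$ is uniquely determined by $x_i$ since $M_i$ is torsion-free, so the family $z=(z_i)$ is a well-defined element of $M$, and $x=a'z$. Therefore $a'\mid x$ in $M$. Since $a=d\cdot a'$ with $d\mid b$ in $A$ and $a'\mid x$ in $M$, we have produced the desired factorization, and Theorem \ref{1}$(ii)$ gives that $M$ is PS.

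The argument has no real obstacle; the only conceptual point is that the weak GCD hypothesis is exactly what lets one reduce to the coprime situation handled by Proposition \ref{2}$(iv)$. Without it, there would be no mechanism to make the componentwise factorizations $x_i=a'z_i$ use a common divisor $a'$ of $a$ independent of $i$, and an uncoordinated choice of factorizations would not assemble into a single element of the product.
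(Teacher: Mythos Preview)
Your proof is correct and follows essentially the same approach as the paper: reduce to the coprime case via a maximal common divisor, then apply Proposition~\ref{2}$(iv)$ coordinatewise and reassemble. The only cosmetic difference is that you phrase things through characterization~$(ii)$ of Theorem~\ref{1}, whereas the paper works directly with the refinement of Definition~\ref{0}; you also explicitly handle the case $x_i=0$, which the paper glosses over.
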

\begin{proof}
Let $A$ be a weak GCD domain, $(M_i)_{i\in I}$ a family of PS modules and $M$ their direct product. 
 Let $a(x_i)_i=b(y_i)_i$ with $a,b\in A-\{0\}$ and $x_i,y_i\in M_i-\{0\}$. Dividing by a maximal common divisor of $a$ and $b$, we may assume that $a$ and $b$ are coprime.
As $ax_i=by_i$, we get   $b|x_i$ (cf. part $(iv)$ of Proposition \ref{2}), thus giving the refinement
$
\begin{array}{ccccc}
 & \vline & a & (x_i)_i \\
\hline 
 b & \vline & 1 & b \\
 (y_i)_i & \vline & a & (x_i/b)_i \\
\end{array}$.
\end{proof}

\begin{corollary} \label{510}
A domain $A$ is a GCD domain iff the direct product $A^A$ is a PS $A$-module.
\end{corollary}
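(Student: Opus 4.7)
The forward direction is immediate: a GCD domain is a weak GCD domain, and it is a PS domain (even Schreier); since each factor of $A^A$ is a PS module, Theorem \ref{12} shows $A^A$ is PS. The substance is in the converse.

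For the converse, suppose $A^A$ is a PS module and fix $a,b\in A-\{0\}$; my goal is to produce $\gcd(a,b)$, which suffices since principal ideals $aA\cap bA$ (lcms) are equivalent to gcds in a domain. The plan is to encode the full set of common multiples of $a$ and $b$ into a single equation in $A^A$. For each $c\in A$, define $u_c,v_c\in A$ by: if $c\in aA\cap bA$, write $c=au_c=bv_c$ (uniquely, since $A$ is a domain), and otherwise set $u_c=v_c=0$. Set $x=(u_c)_{c\in A}$ and $y=(v_c)_{c\in A}$, both nonzero (take $c=ab$, yielding $u_{ab}=b$, $v_{ab}=a$). Coordinatewise $au_c=bv_c$, so $ax=by$ in $A^A$.

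Applying the PS property to this equation produces $c_1,d_1,e_1\in A$ and $z=(z_c)_{c\in A}\in A^A$ with $a=c_1e_1$, $b=c_1d_1$, $x=d_1z$, $y=e_1z$. In particular $c_1$ is a common divisor of $a$ and $b$. Reading off a coordinate $c\in aA\cap bA$ gives $c=au_c=(c_1e_1)(d_1z_c)=c_1d_1e_1\,z_c$, so $c_1d_1e_1$ divides every common multiple of $a,b$; and $c_1d_1e_1=a\cdot d_1=b\cdot e_1$ is itself a common multiple. Hence $c_1d_1e_1$ is an lcm of $a,b$, and $ab/(c_1d_1e_1)=c_1^2d_1e_1/(c_1d_1e_1)=c_1$ is a gcd. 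Thus $A$ is a GCD domain.

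The forward direction uses nothing beyond Theorem \ref{12}. The only non-routine step is the construction of $x,y$ in the converse: the key idea is that indexing $A^A$ by $A$ itself is enough room to store a witness $(u_c,v_c)$ for every potential common multiple simultaneously, so that a single refinement from the PS hypothesis is forced to bound every common multiple from below — producing an lcm and hence a gcd in one stroke. No delicate choice of $z$ is needed; the refinement built by the PS property automatically does the required uniform work.
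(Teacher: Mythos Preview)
Your proof is correct and follows essentially the same strategy as the paper: encode all common multiples of $a,b$ into a single element of $A^A$ and apply the PS property once to extract an lcm (hence a gcd). The only cosmetic differences are that the paper stores the common multiples themselves as the vector (and invokes the equivalent form Theorem~\ref{1}(ii)), whereas you store the quotients $c/a$ and $c/b$ and use the refinement of Definition~\ref{0} directly; the arithmetic is identical.
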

\begin{proof}
$(\Rightarrow)$  follows from Theorem \ref{12}.
$(\Leftarrow)$  Pick $a,b\in A-\{0\}$ and let $F$ be the set of all common multiples of $a$ and $b$. 
Adding some zero components, we may consider that $x=(f)_{f\in F}$ is an element of $A^A$. Then $a,b|x$, so $b | a(x/a)$. 
As $A^A$ is PS,  $c|a,b$ and $(b/c)| (x/a)$ for some $c\in A$. Then $a,b | (ab/c) | x$, so $ab/c$ is the least common multiple of $a$ and $b$.
\end{proof}

Let $A$ be a PS domain which is not a GCD domain (e.g. \cite[Example 4.5]{Z}). By the previous corollary, $A^A$ is not a PS module, so it is not flat by Corollary  \ref{8}.
Next we exhibit an example of this kind where $A^{\mathbb{N}}$ is not a PS module. 

\begin{example} \label{50}
%Over a PS domain, a direct product of PS modules is not always a PS module. 
Consider the  domain  $D=\mathbb{Q} + P_P$ where $S=\{ X^r\ |\ r\in\mathbb{Q}, r\geq 0 \}$ and $P=\{ f\in \mathbb{R}[S]\ |\ f(0)=0\}$ of  \cite[Example 4.5]{Z}. 
We show that    $M=D^{\mathbb{N}}$ is not a PS module over $D$.
By \cite[Example 4.5]{Z}, $D$ is a PS domain which is not 
integrally closed.  Note that  $\sqrt{2}X\in D$ divides the product $Xf$ where $f=(X^{1/n})_{n\geq 1}\in M$. Suppose that  $\sqrt{2}X=ab$ with $a,b\in D$ such that $a|X$ and $b|f$. Then, up to unit multiplication, we get $a=cX^r$, $b=dX^q$  for some $c,d\in \mathbb{R}$, $r,q\in\mathbb{Q}$ with $0< r,q < 1$, $r+q=1$. Then $q\leq 1/n$ for each $n$  
which is a contradiction. Thus $M$ is not a PS module.
\end{example}

\begin{example} \label{51}
By \cite[Example 6.2]{CG}, there exists a UFD $A$ which is not coherent.  By Chase's Theorem \cite[Theorem 2.1]{Ch}, some direct product $A^I$ of copies of $A$ is not a flat $A$-module. By Theorem \ref{12}, $A^I$ is a PS module.
\end{example}

\section{Nagata type theorem}

Let $A$ be a domain and $M$ a torsion-free $A$-module.
Say that an element $a\in A-\{0\}$ is $M$-primal if 
whenever $a|by$ with $b\in A-\{0\}$ and $y\in M-\{0\}$, we have a factorization $a=a_1a_2$ with $a_1,a_2\in A$ such that $a_1|b$ and $a_2|y$.  Note that $a$ is $A$-primal iff it is primal in the sense of \cite[page 254]{Co}. We extend \cite[Theorem 2.6]{Co} to modules.

\begin{theorem} \label{16}  
Let $A$ be a domain and $M$ a torsion-free $A$-module.
Let $S$ a saturated multiplicative subset of $A$ consisting of elements which are both $A$-primal and $M$-primal.
If $M_S$ is a  PS  module over $A_S$, then $M$ is a PS module.
\end{theorem}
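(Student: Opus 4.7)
The plan is to verify criterion $(ii)$ of Theorem \ref{1}: given $a,b\in A-\{0\}$ and $y\in M-\{0\}$ with $a\mid by$ in $M$, I must produce a factorization $a=a_1a_2$ in $A$ with $a_1\mid b$ in $A$ and $a_2\mid y$ in $M$.

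First I apply the PS-property of $M_S$ over $A_S$ to the relation $a\mid by$ viewed in $M_S$. This yields $c_1,c_2\in A_S$ with $a=c_1c_2$, $c_1\mid b$ in $A_S$, and $c_2\mid y$ in $M_S$. Writing each $c_i$ as a fraction with denominator in $S$ and clearing denominators (using that $A$ is a domain and $M$ is torsion-free), I obtain elements $p,q\in A$ and $u,v_1,v_2\in S$ such that
$$au=pq \text{ in } A,\qquad p\mid bv_1 \text{ in } A,\qquad q\mid yv_2 \text{ in } M.$$

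The next step is to peel off the spurious $S$-factors via three primality arguments, exploiting the hypothesis that every element of $S$ is both $A$-primal and $M$-primal. From $u\mid pq$ and $A$-primality of $u$ I write $p=u'\bar p$ and $q=u''\bar q$ with $u'u''=u$, so cancellation in $A$ yields $a=\bar p\bar q$. From $\bar p\mid bv_1$ and $A$-primality of $v_1$ I extract $\bar p=v_1'\tilde p$ with $\tilde p\mid b$ in $A$ and $v_1'\mid v_1$; similarly $M$-primality of $v_2$ applied to $\bar q\mid yv_2$ yields $\bar q=v_2'\tilde q$ with $\tilde q\mid y$ in $M$ and $v_2'\mid v_2$ in $A$. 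Saturation of $S$ makes $W:=v_1'v_2'\in S$, so at this stage I have produced $a=W\tilde p\tilde q$ with $\tilde p\mid b$ in $A$ and $\tilde q\mid y$ in $M$.

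The main obstacle is that this parasitic $S$-factor $W$ still blocks the desired factorization: the naive choices $a_1=\tilde p$, $a_2=W\tilde q$ fail because $W$ need not divide $y$. To overcome this I exploit the original relation $a\mid by$. Writing $b=\tilde p\,b'$ and $y=\tilde q\,y'$ and substituting into $by=az$, I obtain $\tilde p\tilde q\,b'y'=W\tilde p\tilde q\,z$; cancelling $\tilde p\tilde q\neq 0$ via the domain and torsion-free hypotheses leaves $b'y'=Wz$ in $M$. Now $M$-primality of $W\in S$ furnishes $W=W_1W_2$ with $W_1\mid b'$ in $A$ and $W_2\mid y'$ in $M$. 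Setting $a_1:=\tilde p\,W_1$ and $a_2:=\tilde q\,W_2$ gives $a_1\mid b$, $a_2\mid y$, and $a_1a_2=W\tilde p\tilde q=a$, verifying the criterion.
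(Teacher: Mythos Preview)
Your proof is correct and follows the same strategy as the paper's: apply the PS property over $A_S$, clear denominators, and then use iterated primality of the resulting $S$-factors to absorb them back into $A$ and $M$. The paper works with the refinement form of Definition~\ref{0} and peels off the two denominators symmetrically so that no residual factor ever appears, whereas you use criterion~$(ii)$ of Theorem~\ref{1} and dispatch the leftover $W$ by one further application of $M$-primality to the original relation $a\mid by$; this is a minor bookkeeping difference, not a genuinely different idea.
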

\begin{proof}
We adapt the proof of \cite[Theorem 2.6]{Co}.
Let $ax=by$ with $a,b\in A-\{0\}$, $x,y\in M-\{0\}$. Since $M_S$ is a  PS over $A_S$, we have  a 
refinement
$\begin{array}{ccccc}
 & \vline & a & x \\
\hline 
 b & \vline & a_1 & b_1/s \\
 y & \vline & a_2 & z/t \\
\end{array}$
with $a_1,a_2,b_1\in A$, $s,t\in S$ and $z\in M$, which gives the refinement
$\begin{array}{ccccc}
 & \vline & a & stx \\
\hline 
 sb & \vline & a_1 & b_1 \\
 ty & \vline & a_2 & z \\
\end{array}$.
As $S$ is saturated, $s$ is $M$-primal and $s|b_1z$, we can write $s=s_1s_2$ with $s_1,s_2\in S$, $s_1|b_1$ and $s_2|z$. Since $s_2$ is $A$-primal and  $s_2|(b_1/s_1)a_1$, we can write $s_2=s_3s_4$ with $s_3,s_4\in S$, 
$s_3|(b_1/s_1)$ and $s_4|a_1$. We get the refinement
$\begin{array}{ccccc}
 & \vline & a &  tx \\
\hline 
 b & \vline & a_1/s_4 & b_1/(s_1s_3) \\
 ty & \vline & a_2s_4 & z/s_4 \\
\end{array}.$
Set $c_1=a_1/s_4$, $d=b_1/(s_1s_3)$, $c_2=a_2s_4$, $z'=z/s_4$.
We repeat the argument above starting from
$\begin{array}{ccccc}
 & \vline & a &  tx \\
\hline 
 b & \vline & c_1 & d \\
 ty & \vline & c_2 & z' \\
\end{array}.$
We can write $t=t_1t_2$ with $t_1,t_2\in S$, $t_1|d$ and $t_2|z'$. Since $t_1$ is $M$-primal and  $t_1|c_2(z'/t_2)$, we can write $t_1=t_3t_4$ with $t_3,t_4\in S$,  $t_3|c_2$ and $t_4|(z'/t_2)$.
We get the refinement
$\begin{array}{ccccc}
 & \vline & a &   x \\
\hline 
 b & \vline & c_1t_3& d/t_3 \\
  y & \vline & c_2/t_3 & z'/(t_2t_4)
\end{array}$ \ \ thus completing the proof.
\end{proof}

%\begin{example} We retrieve Corollary \ref{24} as an application of Theorem \ref{16}. Indeed, with the notation there, the elements of  $S$ are both $A$-primal and $A_T$-primal. Note that $(A_T)_S$, which  is the quotient field of $A$, is a PS module over $A_S$, so $A_T$ is PS over $A$ by Theorem \ref{16}.\end{example}

Let $R$ be a ring $M$ an $R$-module, $f\in R[X]$ and $g\in M[X]$.
The content of $f$ is the ideal of $R$ generated by the coefficients of $f$. Similarly, the content of $g$ is the submodule of $M$ generated by the coefficients of $g$. The following  theorem is well-known, see for instance \cite[Theorem 1.7.16]{WK}.

\begin{theorem} (Dedekind-Mertens) \label{17}  
Let $R$ be a ring, $M$ an $R$-module, $f\in R[X]$ and $g\in M[X]$. There is some $m\geq 1$ such that
$c(f)^{m}c(fg)=c(f)^{m+1}c(g).$
\end{theorem}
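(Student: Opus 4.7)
The plan has two parts. One direction, $c(f)^m c(fg) \subseteq c(f)^{m+1} c(g)$, holds for every $m \geq 0$ because each coefficient of $fg$ is a finite sum of products $a_i b_j$ with $a_i$ a coefficient of $f$ and $b_j$ a coefficient of $g$, so $c(fg) \subseteq c(f) c(g)$, and multiplying by $c(f)^m$ yields the inclusion.

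For the reverse inclusion, which is the substantive content of Dedekind--Mertens, I would take $m = \max(\deg g, 1)$ and proceed via a Cramer-style argument on the Sylvester--Toeplitz matrix of $f$. Write $n = \deg f$, $k = \deg g$, $f = \sum_{i=0}^n a_i X^i$, and $g = \sum_{j=0}^k b_j X^j$ with $b_j \in M$. Form the $(n+k+1) \times (k+1)$ matrix $T$ over $R$ with entries $T_{r,s} = a_{r-s}$ (convention $a_i := 0$ for $i \notin [0, n]$), so that multiplication by $f$ is the map $(b_0, \ldots, b_k)^\top \mapsto T \cdot (b_0, \ldots, b_k)^\top = (c_0, \ldots, c_{n+k})^\top$ with $c_\ell = [X^\ell](fg)$. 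For any $(k+1)$-subset $S \subseteq \{0, \ldots, n+k\}$ of rows, the Cramer identity $\mathrm{adj}(T_S) \cdot T_S = \det(T_S) \cdot \mathrm{Id}$ gives
\begin{equation*}
  \det(T_S) \cdot b_j \;=\; \sum_{r \in S} (\mathrm{adj}(T_S))_{jr} \cdot c_r \;\in\; c(f)^k \cdot c(fg)
\end{equation*}
for every $j$, since each entry of $\mathrm{adj}(T_S)$ is a $k \times k$ minor of $T_S$ and therefore lies in $c(f)^k$. Consequently $I \cdot c(g) \subseteq c(f)^k c(fg)$, where $I$ denotes the ideal of $R$ generated by the $(k+1)$-minors of $T$.

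The theorem will then follow from the classical Dedekind identity $c(f)^{k+1} = I$. The easy inclusion $I \subseteq c(f)^{k+1}$ is immediate, so the main obstacle is the reverse: every degree-$(k+1)$ monomial $a_0^{e_0} \cdots a_n^{e_n}$ in the coefficients of $f$ must be shown to lie in $I$. I would argue by induction on a lexicographic ordering of monomials. The ``corner'' products of the form $a_0^{k+1-j} a_i^{j}$ (and their symmetric analogues at the $a_n$ end) arise directly as determinants of block-triangular submatrices $T_S$ obtained by taking rows $\{0, 1, \ldots, k-j, \text{top rows involving } a_i\}$, whose computation reduces by Laplace expansion to the product of $a_0^{k+1-j}$ with a small Toeplitz determinant in the $a_i$'s. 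Each remaining monomial is recovered modulo lexicographically smaller ones from a similar Laplace expansion (for instance $\det T_{\{0,\ldots,k-2,k,k+1\}} = a_0^{k-1} a_1^2 - a_0^k a_2$, and since $a_0^k a_2 = \det T_{\{0,\ldots,k-1,k+2\}}$ is already a minor, one gets $a_0^{k-1} a_1^2 \in I$). Assembling the two pieces, $c(f)^{k+1} c(g) \subseteq I \cdot c(g) \subseteq c(f)^k c(fg)$, which combined with the opposite inclusion completes the proof.
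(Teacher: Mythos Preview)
The paper does not prove Theorem~\ref{17}: it merely quotes it as well known and refers to \cite[Theorem 1.7.16]{WK}. So there is no proof in the paper to compare your attempt against.

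As for your argument itself, the overall strategy is a legitimate route to Dedekind--Mertens: the Cramer/adjugate step correctly yields $I\cdot c(g)\subseteq c(f)^{k}c(fg)$, where $I$ is the ideal of $(k{+}1)$-minors of the Toeplitz matrix of $f$, and the easy inclusion $c(fg)\subseteq c(f)c(g)$ is fine. The substantive claim you lean on is $c(f)^{k+1}=I$. This equality is indeed true (and classical), but what you wrote is not a proof of it: you give a couple of illustrative Laplace expansions and assert that ``each remaining monomial is recovered modulo lexicographically smaller ones.'' That sentence hides the entire induction. For a clean argument, one can show directly that every monomial $a_{i_0}a_{i_1}\cdots a_{i_k}$ with $0\le i_0\le\cdots\le i_k\le n$ lies in $I$ by exhibiting the submatrix on rows $\{i_0,\,i_1+1,\,i_2+2,\,\ldots,\,i_k+k\}$: this submatrix is lower triangular with diagonal entries $a_{i_0},\ldots,a_{i_k}$ when $i_0<i_1<\cdots<i_k$, and in the general (non-strict) case its determinant equals $a_{i_0}\cdots a_{i_k}$ plus a $\mathbb{Z}$-linear combination of monomials that are lexicographically smaller in a suitable order, which sets up the promised induction. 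Once that step is written out, your proof is complete; alternatively, the standard references (including \cite{WK}) give a short inductive proof on $\deg g$ that avoids the minor computation altogether.
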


\begin{lemma} \label{40}
Let $A$ be  a PS domain and $M$ a PS module over $A$.
If $a_1,...,a_n,c\in A-\{0\},\ x_1,...,x_m\in M-\{0\}$ satisfy 
$a_1,...,a_n | c,x_1,...,x_m$, then $a_1,...,a_n | d |c,x_1,...,x_m$ for some $d\in A$.
\end{lemma}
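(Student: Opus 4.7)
The plan is to induct on $n$, reducing the inductive step to the two-divisor case, which I will handle by packaging all the divisibility data into a single PS refinement inside the module $A\oplus M^m$.

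The base case $n=1$ is trivial with $d=a_1$. For $n\ge 2$, the inductive hypothesis applied to $a_1,\ldots,a_{n-1}$ yields $d'\in A$ with $a_1,\ldots,a_{n-1}\mid d'\mid c,x_1,\ldots,x_m$. The two-divisor case applied to the pair $d',a_n$ (each of which divides $c$ and every $x_j$) then produces $d$ satisfying $d',a_n\mid d\mid c,x_1,\ldots,x_m$; since $a_i\mid d'\mid d$ for $i<n$, this $d$ inherits all the required divisibility properties.

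For the two-divisor case, let $b_1,b_2\in A-\{0\}$ both divide $c$ and every $x_j$, and write $c=b_1 r_1=b_2 r_2$ with $r_1,r_2\in A$ and $x_j=b_1 y_{1,j}=b_2 y_{2,j}$ with $y_{i,j}\in M$. The module $N:=A\oplus M^m$ is PS by Theorem \ref{45}, since $A$ (a PS domain, hence a PS module over itself) and $M$ are both PS. In $N$, the elements $u:=(r_1,y_{1,1},\ldots,y_{1,m})$ and $v:=(r_2,y_{2,1},\ldots,y_{2,m})$ are nonzero (since $c\neq 0$ forces $r_1,r_2\neq 0$) and satisfy the single equation $b_1 u=b_2 v=(c,x_1,\ldots,x_m)$. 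Applying the PS refinement from Definition \ref{0} yields $c_0,d_0,e_0\in A$ and $w=(s,t_1,\ldots,t_m)\in N$ with $b_1=c_0 e_0$, $b_2=c_0 d_0$, $u=d_0 w$, $v=e_0 w$. Setting $d:=b_1 d_0=b_2 e_0=c_0 d_0 e_0$, one gets $b_1,b_2\mid d$ immediately, while $c=b_1 r_1=b_1 d_0 s=ds$ in $A$ and $x_j=b_1 y_{1,j}=b_1 d_0 t_j=d t_j$ in $M$ give $d\mid c,x_1,\ldots,x_m$.

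The main obstacle is that Theorem \ref{1}(iv), the natural tool for producing a common intermediate, requires one of the two divisors to live in $M$; in our situation both $a_i$'s sit in $A$ and the elements being divided are mixed between $A$ and $M$. Bundling $(c,x_1,\ldots,x_m)$ into a single element of the PS module $A\oplus M^m$ circumvents this and reduces the whole problem to one application of the refinement property of Definition \ref{0}.
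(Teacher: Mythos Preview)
Your argument is correct, but it is not the route taken in the paper. The paper reduces (by a double induction) to the minimal case $n=2$, $m=1$: given $a,b\mid c,x$ with $a,b,c\in A$ and $x\in M$, it first applies a refinement in the PS \emph{domain} $A$ to the equality $(c/b)\cdot b=a\cdot(c/a)$, producing an element $b_1\in A$, and then applies a second refinement in the PS \emph{module} $M$ to $(ab_1/b)(x/a)=b_1(x/b)$, producing $b_2$ with $a,b\mid ab_2\mid c,x$. This mirrors the proof of $(i)\Rightarrow(iv)$ in Theorem~\ref{1}. Your approach instead packages $(c,x_1,\dots,x_m)$ into the PS module $A\oplus M^m$ (via Theorem~\ref{45}) and extracts $d$ from a single refinement there. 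The trade-off: your proof is shorter and handles all $m$ simultaneously, but it imports Theorem~\ref{45}; the paper's proof is self-contained, using only the PS properties of $A$ and $M$ directly, at the cost of two refinement steps and an induction on $m$ as well as on $n$.
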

\begin{proof} 
The general case can be obtained by induction from the basic case $n=2$, $m=1$, so it suffices to  settle this case.
Assume that $a,b | c,x$ with $a,b,c\in A-\{0\}$ and $x\in M-\{0\}$.
There exists some $b_1\in A-\{0\}$ giving the refinement
$$
\begin{array}{ccccc}
 & \vline & c/b & b \\
\hline 
 a & \vline & (ab_1)/b & b/b_1 \\
 c/a & \vline &  c/(ab_1) & b_1. \\
\end{array}$$
 Then $(ab_1/b)(x/a) =b_1(x/b)$ with $ab_1/b\in A$  and 
 $x/a, x/b\in M$, hence 
there exists some $b_2\in A-\{0\}$ giving the refinement
$$\begin{array}{ccccc}
 & \vline & (ab_1)/b & x/a \\
\hline 
 b_1 & \vline &  b_1/b_2 & b_2 \\
  x/b& \vline & (ab_2)/b & x/(ab_2) \\
\end{array}$$
so $a,b | ab_2 | c,x$ because $ab_2|ab_1|c$.
\end{proof}

Recall that a {\em Schreier domain} is an integrally closed PS domain. 
Let $M$ be a torsion-free module over a domain $A$. In the spirit of \cite{Re}, say that $M$ is integrally closed if 
$$M=\cap \{ VM\ | \ V \mbox{ valuation overring of } A\}.$$

\begin{theorem} \label{61}
If $A$ is a Schreier domain and $M$ an integrally closed PS module over $A$, then $M[X]=:M\otimes_A A[X]$ is a PS module over $A[X]$.
\end{theorem}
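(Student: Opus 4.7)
The plan is to adapt the proof of \cite[Theorem 2.7]{Co} (which treats the case $M = A$) by reducing, via the Nagata-type Theorem \ref{16}, to a localization at primitive polynomials. Let $S \subseteq A[X]$ be the multiplicative subset of polynomials whose coefficient ideal equals $A$. To apply Theorem \ref{16} to the pair $A[X] \supseteq A[X]$ and the module $M[X]$, I must establish: (a) $S$ is saturated multiplicative; (b) every $f \in S$ is both $A[X]$-primal and $M[X]$-primal; (c) $M[X]_S$ is a PS module over $A[X]_S$.

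Task (a) is the Gauss-type content lemma for Schreier domains, which follows by combining Dedekind--Mertens (Theorem \ref{17}) with the integral closedness of $A$; this is already contained in Cohn's argument. Task (c) reduces, once $A[X]$ has been recognised as Schreier by \cite[Theorem 2.7]{Co}, to verifying that the $A[X]_S$-module $M[X]_S$ satisfies the criterion of Theorem \ref{1}(vi): for each $f \in A[X]_S - \{0\}$ and $\eta \in M[X]_S - \{0\}$, the ideal $(fM[X]_S :_{A[X]_S} \eta)$ must be locally cyclic. Because $S$ consists of primitive polynomials, every element of $A[X]_S$ is represented, up to unit, by content-theoretic data in $A$, so the locally cyclic property pulls back from the PS-property of $M$ over $A$ via the structural description of $M[X]_S$ as an inductive limit of direct sums of copies of $M$ (Lazard's Theorem applied to the $A$-flat ring $A[X]_S$, together with Theorem \ref{102}, Theorem \ref{45}, and Proposition \ref{4}).

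The heart of the argument is task (b), whose ring half is exactly \cite[Theorem 2.7]{Co}. For the module half, suppose $f \in S$ divides $g \eta$ in $M[X]$ for some $g \in A[X]-\{0\}$ and $\eta \in M[X]-\{0\}$. After using Lemma \ref{40} to strip any common factor of $f$ and $g$, the task reduces to showing $f \mid \eta$ in $M[X]$. Here the integrally-closed hypothesis on $M$ is crucial. For each valuation overring $V$ of $A$, $V$ is Bezout, hence $VM := V \otimes_A M$ is a PS module over $V$ (Proposition \ref{44}); moreover $f$ remains primitive in $V[X]$, so a Gauss-type calculation using Dedekind--Mertens over $V$ places $\eta$ in $f \cdot VM[X]$. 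Intersecting over all $V$ and invoking $M = \bigcap_V VM$ then lands $\eta$ in $fM[X]$, which furnishes the trivial factorization $f = 1 \cdot f$.

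The main obstacle will be task (b) in the module case: pushing divisibility by a primitive polynomial from each valuation-overring picture back to $M[X]$ itself, while simultaneously tracking content ideals in $A$ and content submodules in $M$. It is precisely the requirement that $M$ be integrally closed, rather than merely torsion-free PS, that enables this patching and justifies the final intersection step.
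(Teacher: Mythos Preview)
Your choice of $S$ is where things go wrong. The paper (following Cohn) takes $S = A-\{0\}$, not the set of primitive polynomials. With that choice, task (c) is immediate: $A[X]_S = K[X]$ is a PID, so every torsion-free $K[X]$-module---in particular $M[X]_S$---is PS by Proposition~\ref{2}(iii). Task (b) then only asks that each constant $d\in A-\{0\}$ be $A[X]$-primal (this is precisely Cohn's Theorem 2.7) and $M[X]$-primal. For the latter the paper argues: if $d\mid fg$ with $f\in A[X]$, $g\in M[X]$, then Dedekind--Mertens over each valuation overring $V$ gives $d\mid a_iy_j$ in $VM$ for every coefficient $a_i$ of $f$ and $y_j$ of $g$; the integral-closure hypothesis brings this back to $M$, and Lemma~\ref{40} produces a single $q\in A$ with $(d/q)\mid f$ and $q\mid g$.

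With your $S$, both (b) and (c) acquire genuine gaps. In (c), Theorem~\ref{102}, Theorem~\ref{45}, and Proposition~\ref{4} yield only that $M\otimes_A A[X]_S$ is PS \emph{as an $A$-module}; none of those results changes the base ring, so PS over $A[X]_S$ is not established. Moreover, over a Schreier domain that is not GCD, a polynomial need not factor as a constant times a primitive polynomial, so $A[X]_S$ need not have every principal ideal generated by an element of $A$, and your informal reduction of (c) to data in $A$ does not go through. In (b), Lemma~\ref{40} concerns common \emph{scalar} divisors in $A$ of an element of $A$ and elements of $M$; it says nothing about common polynomial factors of $f$ and $g$ in $A[X]$, so the step ``strip common factors, reduce to $f\mid\eta$'' is unsupported. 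Even over a valuation overring $V$, the ring $V[X]$ is not Bezout unless $V$ is a field, so Proposition~\ref{44} does not make $VM[X]$ a PS $V[X]$-module, and the claimed Gauss-type conclusion $f\mid\eta$ in $VM[X]$ is not justified.
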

\begin{proof} 
By \cite[Theorem 2.7]{Co}, every element of $S=A-\{0\}$ is $A[X]$-primal. 
We show that every  $d\in S$ is $M[X]$-primal.
Suppose that $d|fg$ with $f\in A[X]-\{0\}$ and $g\in M[X]-\{0\}$.
Let $V$ be a valuation overring of $A$. Then 
by Theorem \ref{17}, $c(f)^{l}c(fg)V=c(f)^{l+1}c(g)V$ for some $l\geq 1$, so 
$c(fg)V=c(f)c(g)V$ because $c(f)V$ is principal. 
Let  
$a_0,...,a_n$ and $y_0,...,y_m$ be the nonzero coefficients of $f$ and $g$ respectively. Set $a=a_0a_1\cdots a_n$.
Then $d|a_iy_j$ in $MV$, hence $d|a_iy_j$ in $M$ since is integrally closed. 
As $A$ and $M$ are PS, Lemma \ref{40} shows there exists some $q\in A$ with
$$a, (ad)/a_0,\ ...,\ (ad)/a_n\ |\ q\ |\  ad,\ ay_0,\ ...,\ ay_m.$$
We see that $(d/q)|f$ and $q|g$, so $d$ is $M[X]$-primal.
Note that $A_S=K$ is the quotient field of $A$ and $M[X]\otimes K$ is a PS module over $K[X]$ because $K[X]$ is a PID.
By Theorem \ref{16}, $M[X]$ is a PS module over $A[X]$.
\end{proof}

\section{Odds and ends}

\begin{proposition} \label{62}
If $A$ is a domain and $M,P$ are $A$-modules such that $P$ is PS, then $Hom(M,P)$ is a PS module, provided one of the following two conditions holds.

$(i)$  $M$ is finitely generated.

$(ii)$ $A$ is a weak GCD domain.
\\ In particular, if $A$ is an GCD domain, then the dual of any $A$-module is PS.
\end{proposition}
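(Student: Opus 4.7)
The plan is to realize $\mathrm{Hom}(M,P)$ as an RD-submodule of a direct product of copies of $P$, and then invoke Proposition \ref{2}(ii) together with one of the two results available for direct sums/products: Theorem \ref{45} for finite sums or Theorem \ref{12} for arbitrary products over a weak GCD domain.

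Concretely, I would first fix a set $I$ and an epimorphism $\pi\colon F = A^{(I)}\to M$. The induced injection $\mathrm{Hom}(M,P)\hookrightarrow \mathrm{Hom}(F,P) = P^{I}$ sends $\varphi$ to $\varphi\circ\pi$, and identifies $\mathrm{Hom}(M,P)$ with the submodule $N$ of $P^{I}$ consisting of maps $F\to P$ that vanish on $\ker\pi$. The crucial observation is that $N$ is an RD-submodule of $P^{I}$: if $a\tilde\varphi\in N$ with $a\in A-\{0\}$, then for every $k\in\ker\pi$ we have $a\tilde\varphi(k)=0$ in $P$, and since $P$ is torsion-free this forces $\tilde\varphi(k)=0$, so $\tilde\varphi\in N$. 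Hence $aP^{I}\cap N = aN$, so $N$ is RD in $P^{I}$. Since $P^{I}$ is torsion-free, so is $N$, so Proposition \ref{2}(ii) will transfer the PS property from $P^{I}$ to $N\cong\mathrm{Hom}(M,P)$.

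It then remains to show $P^{I}$ is PS in each case. In case (i), since $M$ is finitely generated we may take $I$ finite, so $P^{I}$ is a finite direct sum of copies of the PS module $P$ and is therefore PS by Theorem \ref{45}. In case (ii), $A$ is a weak GCD domain, so $P^{I}$ is PS for any index set $I$ by Theorem \ref{12}. The final clause is immediate: a GCD domain is a fortiori a weak GCD domain, and a GCD domain is PS as a module over itself, so applying (ii) with $P=A$ shows that $\mathrm{Hom}(M,A)$ is PS for every $A$-module $M$.

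There is no real obstacle here; the only thing to verify carefully is the RD property of the image of $\mathrm{Hom}(M,P)$ in $P^{I}$, which as sketched above reduces instantly to the torsion-freeness of $P$. All other pieces — the identification $\mathrm{Hom}(A^{(I)},P)=P^{I}$, the transfer of PS along RD-submodules, and the direct sum/product stability of PS — are already in place from the earlier sections.
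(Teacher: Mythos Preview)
Your proposal is correct and follows essentially the same route as the paper: embed $\mathrm{Hom}(M,P)$ into $P^{I}$ via a free presentation of $M$, observe that the image sits there with torsion-free cokernel (equivalently, as an RD-submodule), and then invoke Theorem~\ref{45} in case~(i) and Theorem~\ref{12} in case~(ii). The paper phrases the RD step as an application of Corollary~\ref{101} (using that the cokernel embeds in the torsion-free module $\mathrm{Hom}(K,P)$), but this is the same argument you give directly.
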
 
\begin{proof}
Clearly $Hom(M,P)$ is torsion-free.
$(i)$ From a short exact sequence of the form $0\rightarrow K\rightarrow A^n\rightarrow M\rightarrow 0$, we get the exact sequence
$$0\rightarrow Hom(M,P)\rightarrow Hom(A^n,P)=P^n\rightarrow Hom(K,P)$$ with $Hom(K,P)$ torsion free,
so $Hom(M,P)$ is a PS module by Corollary \ref{8} and Corollary  \ref{101}.
For $(ii)$, repeat the argument above invoking Theorem \ref{12} instead of Corollary \ref{8}.
\end{proof}

We connect our PS module concept to factorial and factorable modules. These notions were introduced by A.-M. Nicholas in \cite{N}, \cite{N2} and refined by D. L. Costa \cite{C}, C.-P. Lu \cite{Lu} and D. D. Anderson, S. Valdes-Leon \cite{AV}. More recent work on this subject was done by G. Angermüller in \cite{A}. First, we recall some definitions.

\begin{definition} 
Let $A$ be a domain and $M$ a nonzero torsion-free $A$-module. 

$(i)$ An element $x\in M-\{0\}$ is an irreducible element (atom) if every scalar
$a\in A$ dividing $x$ is a unit.

$(ii)$ $M$ is atomic if every $x\in M-\{0\}$ is divisible by some atom.

$(iii)$ An element $x\in M-\{0\}$ is primitive if the factor module $M/Ax$ is torsion-free.

$(iv)$  $M$ is factorable if $M$ is atomic and all atoms are primitive.

$(v)$  $M$ is factorial if $A$ is a UFD and $M$ is factorable.
\end{definition}

\begin{proposition}  \label{100}
Let $A$ be a  domain and  $M$ a torsion-free module over $A$.

$(i)$ If $M$ is atomic and PS, then $M$ is factorable.

$(ii)$ If $A$ is PS  and $M$ is factorable, then $M$ is PS. 

$(iii)$ If $M$ is factorial, then $M$ is PS. 
\end{proposition}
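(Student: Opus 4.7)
The plan is to handle the three parts in the order (i), (ii), (iii), with (iii) being an immediate corollary of (ii) since every UFD is PS.

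For part (i), let $\alpha\in M$ be an atom; we want $M/A\alpha$ torsion-free. So suppose $am=b\alpha$ with $a\in A-\{0\}$ and $m\in M$. The easy sub-cases $m=0$ or $b=0$ are handled by torsion-freeness, so assume $m,b$ are both nonzero. Applying the PS refinement to $am=b\alpha$ yields $c,d,e\in A$ and $z\in M$ with $a=ce$, $b=cd$, $m=dz$, $\alpha=ez$. But $\alpha=ez$ with $e\in A$ means $e$ divides the atom $\alpha$, hence $e$ is a unit; then $z=e^{-1}\alpha\in A\alpha$, and consequently $m=dz\in A\alpha$, which is exactly what primitivity of $\alpha$ requires. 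Combined with atomicity (which is part of the hypothesis), $M$ is factorable.

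For part (ii), I use the characterization in Theorem~\ref{1}(ii): it is enough to show that whenever $a\mid by$ with $a,b\in A-\{0\}$ and $y\in M-\{0\}$, there is a factorization $a=a_1a_2$ with $a_1\mid b$ and $a_2\mid y$. Using atomicity of $M$, write $y=c\alpha$ with $c\in A$ and $\alpha$ an atom. The equation $az=by=bc\alpha$ (for some $z\in M$) combined with the fact that $\alpha$ is primitive (so $M/A\alpha$ is torsion-free) forces $z\in A\alpha$, say $z=d\alpha$. Cancelling the nonzero $\alpha$ using torsion-freeness of $M$ produces $ad=bc$ in $A$. Now $A$ is a PS domain, so the divisibility $a\mid bc$ refines: $a=a_1a_2$ with $a_1\mid b$ and $a_2\mid c$. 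Since $c\mid y$ (because $y=c\alpha$), transitivity gives $a_2\mid y$, completing the verification.

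For part (iii), $M$ factorial means $A$ is a UFD and $M$ is factorable; a UFD is certainly a PS domain, so part (ii) applies directly and $M$ is PS.

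The main obstacle I expect is bookkeeping around the definitions of atom and primitivity in the module setting, particularly in part (i): one must notice that the PS refinement of $am=b\alpha$ writes $\alpha$ as a \emph{scalar} multiple of an element of $M$, which is exactly the situation controlled by the atom definition. Once that observation is made, the rest is straightforward manipulation and an appeal to the scalar PS property of $A$ in part (ii).
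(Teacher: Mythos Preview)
Your proof is correct and follows essentially the same route as the paper's. The only cosmetic differences are: in (i) you verify primitivity of an atom $\alpha$ via ``$am\in A\alpha \Rightarrow m\in A\alpha$'' using the refinement directly, whereas the paper phrases it as ``$x\mid by \Rightarrow x\mid y$'' and invokes Theorem~\ref{1}(iii); in (ii) you factor $y$ through an atom and use Theorem~\ref{1}(ii), while the paper factors $x$ and works with the refinement of Definition~\ref{0}---but the underlying idea (reduce to a scalar equation via primitivity, then apply the PS property of $A$) is identical.
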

\begin{proof} 
$(i)$ 
By \cite[Proposition 2]{A}, it suffices to show that every irreducible element $x$ of $M$ is primitive. Let $x|by$ with 
$b\in A-\{0\}$ and $y\in M-\{0\}$. By part $(iii)$  of
Theorem \ref{1}, we have $x=az$ for some $a\in A$, $z\in M$ with $a|b$ and $z|y$. As $x$ is irreducible, $a$ is a unit, hence $x|y$. 
So $x$ is primitive.

$(ii)$ 
Let $ax=by$ with $a,b\in A-\{0\}$, $x,y\in M-\{0\}$. 
As $M$ is factorable, we can write $x=a'z$ with $a'\in A$ and 
$z\in M$ primitive. From $z|by$, we get $z|y$, so $y=b'z$ for some $b'\in A$. Cancel $z$ to get $aa'=bb'$. As $A$ is PS, we have a
refinement 
$\begin{array}{ccccc}
 & \vline & a & a' \\
\hline 
 b & \vline & c & b_1 \\
 b' & \vline & a_1 & d \\
\end{array}$
which gives the refinement 
$\begin{array}{ccccc}
 & \vline & a & x \\
\hline 
 b & \vline & c & b_1 \\
 y & \vline & a_1 & dz \\
\end{array}.$

$(iii)$ If $M$ is factorial, then $A$ is a UFD and $M$ is factorable \cite[Proposition 3]{A}, so $(ii)$ applies.
\end{proof}

\begin{remark} 
If $A$ is a PS domain which is not a GCD domain (e.g. \cite[Example 4.5]{Z}), then $A^2$ is a PS $A$-module but not factorable, cf. \cite[Corollary 7]{A}. If $A$ is a non-PS domain, then $A$ over itself is a factorable non-PS module.
\end{remark}

\begin{proposition} \label{64}
Let $A$ be a weak GCD domain  and $M$ a torsion-free module over $A$. Then an intersection of PS submodules of $M$ is PS.
\end{proposition}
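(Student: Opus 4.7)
The plan is to work directly with Definition \ref{0}, reducing via the weak GCD hypothesis to the coprime case handled by part $(iv)$ of Proposition \ref{2}, and then exploiting torsion-freeness of $M$ to pull the quotient element back into the intersection.

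Let $\{N_i\}_{i\in I}$ be a family of PS submodules of $M$ and set $N=\bigcap_{i\in I}N_i$. Since $M$ is torsion-free, so is $N$. Suppose $ax=by$ with $a,b\in A-\{0\}$ and $x,y\in N-\{0\}$. Using the weak GCD hypothesis on $A$, write $a=da'$ and $b=db'$ where $d$ is a maximal common divisor of $a,b$, so that $\gcd(a',b')=1$. By Remark \ref{21}, it suffices to find a refinement for the reduced equation $a'x=b'y$.

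Now fix any $i\in I$. Because $x,y\in N_i$, the equation $a'x=b'y$ takes place inside the PS module $N_i$, and since $a',b'$ are coprime, part $(iv)$ of Proposition \ref{2} applied inside $N_i$ gives $b'\mid x$ in $N_i$, i.e.\ there exists $z_i\in N_i$ with $x=b'z_i$. The point is that $z_i$ is independent of $i$: if $x=b'z_i=b'z_j$ as elements of $M$, then $b'(z_i-z_j)=0$ in the torsion-free module $M$, forcing $z_i=z_j$. Hence there is a single element $z\in\bigcap_i N_i=N$ with $x=b'z$. Substituting into $a'x=b'y$ and cancelling $b'$ (allowed by torsion-freeness) yields $y=a'z$.

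This produces the refinement
\[
\begin{array}{ccccc}
 & \vline & a & x \\
\hline
 b & \vline & d & b' \\
 y & \vline & a' & z \\
\end{array}
\]
with $z\in N$, verifying that $N$ is PS. The only subtle point — and what I would flag as the main thing to get right — is the uniqueness argument that pins $z$ down inside $N$; everything else is a direct invocation of Remark \ref{21} and Proposition \ref{2}$(iv)$.
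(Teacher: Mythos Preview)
Your proof is correct and follows essentially the same route as the paper: reduce to the coprime case via the weak GCD hypothesis, apply Proposition~\ref{2}$(iv)$ in each $N_i$, and observe that the resulting quotient lies in the intersection. The paper's version is terser---it simply writes ``$y/a\in N$'' without further comment---whereas you make explicit the torsion-freeness argument pinning down the quotient uniquely, which is exactly the point the paper is tacitly using.
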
 
\begin{proof}
Let $F$ be a family of PS submodules with intersection $N$ and let $a|by$ with $x,y\in N$. We may assume $(a,b)=1$. Then $y/a\in N$.
\end{proof}

Let $A$ be a weak GCD domain, $M$ a PS module over $A$ and $N$ a submodule of $M$. Let $P$ be the intersection of all PS submodules of $M$ containing $N$. Then $P$ is PS by Proposition \ref{64}. We give another description of $P$.
For a submodule $Q$ of $M$, let $Q'$ be the submodule of $M$ generated by all vectors $x/a$ where $x\in Q$ and $bx\in aQ$ for some coprime scalars $a,b\in A-\{0\}$. 
Iterating this prime operation $Q\mapsto Q'$, we get the ascending sequence of submodules of $M$ 
$$N_0=N\subseteq N_1=N_0'\subseteq ... \subseteq N_n \subseteq 
N_{n+1} = N_n'\subseteq ... 
$$
whose union we denote by $S$. We show that $P=S$. To prove that $S$ is PS, let $ax=by$ with $a,b\in A-\{0\}$ and $x,y\in S-\{0\}$. Dividing by a maximal common divisor of $a$ and $b$, we may assume that $a$ and $b$ are coprime. As $x,y\in N_n$ for some $n$, we get that $y/a\in N_{n+1}$ 
 thus giving the refinement
$
\begin{array}{ccccc}
 & \vline & a & x \\
\hline 
 b & \vline & 1 & b \\
 y & \vline & a & y/a \\
\end{array}$. Therefore $S$ is a PS module, hence $P\subseteq S$. The other inclusion is obvious.

\end{document}